\documentclass[12pt,reqno]{amsart}
\usepackage{etoolbox}
\makeatletter
\patchcmd\maketitle
{\uppercasenonmath\shorttitle}
{}
{}{}
\patchcmd\maketitle
{\@nx\MakeUppercase{\the\toks@}}
{\the\toks@}
{}
{}{}
\patchcmd\@settitle{\uppercasenonmath\@title}{\Large}{}{}
\patchcmd\@setauthors
{\MakeUppercase{\authors}}
{\authors}
{}{}
\makeatother
\usepackage{amsmath, amsthm, amscd, amsfonts, amssymb, graphicx, color}
\usepackage{url}
\usepackage{tikz-cd}
\usepackage{hyphenat}
\hyphenation{he-lio-trope opos-sum}
\usepackage[utf8]{inputenc}
\usepackage[T1]{fontenc}
\textheight 22.5truecm \textwidth 15.5 truecm
\setlength{\oddsidemargin}{0.35in}\setlength{\evensidemargin}{0.35in}
\setlength{\topmargin}{-.5cm}

\newtheorem{theorem}{Theorem}[section]

\newtheorem{corollary}{Corollary}[section]
\newtheorem{proposition}{Proposition}[section]
\newtheorem{lemma}{Lemma}[section]
\newtheorem{remark}{Remark}[section]
\newtheorem{example}{Example}[section]

\numberwithin{equation}{section}

\usepackage[colorlinks=true]{hyperref}
\hypersetup{urlcolor=blue, citecolor=red , linkcolor= blue}
\usepackage[capitalise,noabbrev,nameinlink]{cleveref}      

\def\bh{\mathcal{B}(\mathcal{H})}
\def\cH{\mathcal{H}}

\def\h{\mathcal{H}}
\def\R{\mathbb{R}}
\def\C{\mathbb{C}}


\begin{document}
\author[T. Bottazzi and  C. Conde ] {\Large{Tamara Bottazzi}$^{1_{a,b}}$ and  \Large{Cristian Conde}$^{1_b, 2}$}

\address{$^{[1_a]}$ Universidad Nacional de R\'io Negro. Centro Interdisciplinario de Telecomunicaciones, Electrónica, Computación Y Ciencia Aplicada (CITECCA), Sede Andina (8400) S.C. de Bariloche, Argentina.}
\address{$^{[1_b]}$ Consejo Nacional de Investigaciones Cient\'ificas y T\'ecnicas, (1425) Buenos Aires,
	Argentina.}
\email{\url{tbottazzi@unrn.edu.ar}}

\address{$^{[2]}$ Instituto de Ciencias, Universidad Nacional de Gral. Sarmiento, J. M. Gutierrez 1150, (B1613GSX) Los Polvorines, Argentina}
\email{\url{cconde@campus.ungs.edu.ar}}

\keywords{Buzano inequality, Cauchy-Schwarz inequality, Inner product space, Hilbert space, Bounded linear operator.}
\subjclass[2020]{Primary: 46C05, 26D15. Secondary: 47B65, 47A12.}
\date{\today}

\title[ Generalized Buzano Inequality]
{ Generalized Buzano Inequality }
\maketitle

\begin{abstract}
If $P$ is an orthogonal projection defined on an inner product space $\mathcal{H}$, then the inequality
$$
|\langle Px, y\rangle|\leq \frac12 [\|x\|\|y\|+|\langle x, y\rangle|]
$$
fulfills  for any $x,y \in \mathcal{H}$ (see \cite{Dra16}). In particular, when $P$ is the identity operator, then it recovers the famous Buzano inequality.
We obtain generalizations of such classical inequality, which hold for certain families of bounded linear operators defined on $\mathcal{H}$. In addition, several new inequalities involving the norm and numerical radius of an operator are established.
\end{abstract}

\section{Introduction}\label{s1}

Let $(\mathcal{H}, \langle \cdot, \cdot\rangle)$
be an inner product space over the real or complex numbers field $\mathbb{K}$. The
following inequality is well known in literature as the Cauchy-Schwarz  inequality
\begin{equation}\label{CS}
	|\langle x, y\rangle|\leq \|x\| \|y\|,
\end{equation}
for any $x, y \in \mathcal{H}$.   The equality in  \eqref{CS} holds if and only if there exists a constant $\alpha \in \mathbb{K}$ 
such that $x = \alpha y.$

In \cite{buzano}, Maria Luisa Buzano gave the following extension of the celebrated Cauchy– Schwarz  inequality in $\mathcal{H}$
\begin{equation}\label{buzano}
	|\langle x, z\rangle \langle z, y\rangle|\leq \frac12(|\langle x, y\rangle|+\|x\| \|y\| )\|z\|^2,
\end{equation}
for any $x,y, z\in \mathcal{H}.$ Last inequality  is called Buzano  inequality.

The original proof of Buzano has it difficulty  since it requires some facts about orthogonal decomposition of a complete inner product space. 

In \cite{Dra85}, Dragomir established a  refinement
of \eqref{CS} which implies the Buzano inequality.
Moreover, Fuji and Kubo \cite{fujiikubo} gave a simpler proof of \eqref{buzano} by using an orthogonal projection on a subspace of  $\mathcal{H}$ and \eqref{CS}. Furthermore, they characterized when the equality holds.

This paper aims to present new generalizations of Buzano inequality and it is organized as follows. Section 2 contains some definitions and usual results about bounded linear operators defined on a Hilbert space. In Section 3, we present and prove the $\frac{1}{\alpha}$-Buzano inequality (if $\alpha=2$ gives the classical Buzano inequality) and it is devoted to describing different families of operators which fulfill such inequality for different values of the parameter $\alpha$.  Finally, in Section 4 relates the distinct inequalities previously obtained with the numerical radius, improving new bounds for the last one.

\section{Preliminaries}\label{s2}

As any pre-Hilbert space can be completed to a Hilbert space, from now on, we suppose that $\mathcal{H}$ is a Hilbert space. 
Let $\mathcal{B}(\mathcal{H})$ denote the $C^*$-algebra of all bounded linear operators acting on a separable non trivial complex Hilbert space
$\mathcal{H}$ with an inner product $\langle\cdot,\cdot\rangle$ and the corresponding norm $\|\cdot\|$. The symbol $I$ stands for the identity operator and 
$\mathcal{GL}(\mathcal{H})$ denotes the group of invertible operators on $\mathcal{H}$.

The range of every operator is denoted by $\mathcal{R}(T)$, its null space by $\mathcal{N}(T)$. If $T\in \mathcal{B}(\mathcal{H}),$ we say that $T$ is a positive operator,  $T\geq 0$,   whenever $\langle Tx,x\rangle \geq 0$ for all $x\in \mathcal{H}$ and we denote by  $\bh^+$, the subset of all positive bounded linear operators definded on $\cH$. The definition of
positivity induces the order $T\geq S$ for self-adjoint operators if and only if  $T-S\geq 0.$   For any $T\in \bh^+$, there exists a unique positive $T^{1/2}\in \mathcal{B}(\mathcal{H})$ such that $T=(T^{1/2})^2$.
Let  $T^*$ be the adjoint of $T$  and $|T|=(T^*T)^{1/2}.$

The polar decomposition theorem asserts that for every operator $T\in \bh$  there is a partial isometry $V\in \bh$ such that can be written as the product $T=V|T|$. In particular, $V$ satisfying
$\mathcal{N}(V)=\mathcal{N}(T)$
exists and is uniquely determined.

For any $T\in \bh$, we denote by $\sigma(T)$ its spectrum and by $\sigma_{app}(T)$ its approximate point spectrum, that is
$$\sigma_{app}(T)=\{\lambda\in \mathbb{C}: \exists \:\{x_n\}_{n\in \mathbb N}, \|x_n\|=1 \:{\textrm{and}}\:\lim _{{n\to \infty }}\|Tx_{n}-\lambda x_{n}\|=0\}.$$

For any $T\in \bh$, we define $m(T)=\inf\{\|Tx\|:\ x\in \mathcal{H},\|x\|=1\}$. Clearly, $m(T)\geq 0$  and $m(T)>0$ if and only if $0\notin \sigma_{app}(T)$ (\cite{PHD}).

For a linear operator $T$ on a Hilbert space $\mathcal{H}$, the numerical range $W(T)$ is the
image of the unit sphere of $\mathcal{H}$ under the quadratic form $x\to \langle Tx, x\rangle$. More precisely,
\begin{equation*}
	W(T)=\{\langle Tx, x\rangle : x\in \mathcal{H}, \|x\|=1\}.
\end{equation*}
The numerical range of an operator is a convex subset of the complex plane (\cite{Hal}). Then, for any $T$ in $\mathcal{B}(\mathcal{H})$ we define the numerical radius of $T$, 
\begin{equation*}
	\omega(T)=\sup\{|\lambda|: \lambda \in W(T)\}.
\end{equation*}

It is well-known that $\omega(\cdot)$ defines a norm on   $\mathcal{B}(\mathcal{H})$, and we have for all $T \in \mathcal{B}(\mathcal{H})$, 
\begin{equation}\label{omegaequiv}
	\frac 12 \|T\|\leq \omega(T)\leq \|T\|. 
\end{equation}
Thus, the usual operator norm and the numerical radius are equivalent. Inequalities in \eqref{omegaequiv} are sharp if $T^2=0$, then the first inequality becomes  equality, while the second inequality becomes an equality if $T$ is normal.

For any compact operator $T\in \mathcal{B}(\mathcal{H})$ and $j\in \mathbb{N}$, let $s_j(T)=\lambda_j(|T|)$, be the $j$-th singular value of $T$, i.e.
the $j$-th eigenvalue of $|T|$ in decreasing order and repeated
according to multiplicity.  Let $\rm tr(\cdot)$ be the trace functional,
\begin{equation*}
	{\rm tr}(T)=\sum_{j=1}^{\infty} \langle Te_j,e_j\rangle,
\end{equation*}
where $\{e_j\}_{j=1}^{\infty}$ is an orthonormal basis of $\mathcal{H}$. 
Note that this coincides with the usual definition of the trace if $\mathcal{H}$ is finite-dimensional. 



Let $T=x\otimes y$ be a rank one operator defined by $T(z)=\langle z, y\rangle x$  with $x, y, z \in \mathcal{H}$. Then, by Lemma 2.1 in \cite{ChGLT} and using the well-known fact that ${\rm tr}(x\otimes y)=\langle x, y\rangle$,  we obtain
\begin{equation*}
	\omega(x\otimes y)=\frac 12 \left(|{\rm tr}(x\otimes y)|+\|x\otimes y\|\right)=\frac 12 \left(|\langle x, y\rangle |+\|x\| \|y\|\right).
\end{equation*}
We remark that the numerical radius of the rank one operator $T=x\otimes y$ coincides with the upper bound of Buzano  inequality. From this fact, we are able to give a new proof of inequality \eqref{buzano} using this fact. If $\|z\|=1,$	then $\langle Tz, z\rangle=\langle z, y\rangle \langle x, z\rangle \in W(T)$ and
\begin{equation*}
	|\langle x, z\rangle \langle z, y\rangle|=|\langle Tz, z\rangle|\leq\omega(T) = \frac{1}{2}\left(|\langle x, y\rangle| + \| x\| \| y\|\right).
\end{equation*}

For $T\in \bh$, we have, by definition, 
$$
dist(I,\mathbb{C}T):=\inf_{\gamma \in \mathbb{C}}\|\gamma T-I\|\: \text{ and }  \:	dist(T,\mathbb{C}I):=\inf_{\beta \in \mathbb{C}}\|T-\beta I\|. $$

Evidently there is at least
one complex number $\gamma_0\in \mathbb{C}$ such that $dist(I,\mathbb{C}T)=\|\gamma_0 T-I\|$ and  in addition, if $m(T)>0$ then the value $\gamma_0$ is unique. Following Stampfli \cite{stampfli}, we call such scalar as the center of mass of $T$ and we denote by $c(T).$
For $A, T\in \bh$ such that $m(T)>0$ we consider
\begin{equation}\label{paul}
	M_T(A)=\sup_{\|x\|=1}\left[\|Ax\|^2-\frac{|\langle Ax, Tx\rangle|^2}{\|Tx\|^2}\right]^{1/2}.
\end{equation}
In \cite{Pa}, Paul proved that $M_T(A)=dist(A,\mathbb CT)$.

Given $T, S\in \bh$ we said that $T$ is Birkhoff-James orthogonal to $S$ if and only if $\|T\|\leq \|T-\lambda S\|$ for every $\lambda\in \C$.

\section{$\frac{1}{\alpha}$-Buzano inequality}\label{s3}

In the last decades, several mathematicians presented different proofs of Buzano  inequality. We start by presenting a new and simple proof of such inequality using a rank one operator.

Given $z\in \mathcal{H}$ with $\|z\|=1$ and $\alpha \in \mathbb{C}$, we consider the rank one operator $T=z\otimes z$. Then, for any $u\in \mathcal{H}$, it holds
\begin{equation*}
	\|(\alpha T-I)u\|^2=\|\alpha Tu-u\|^2=(|\alpha-1|^2-1)|\langle z, u\rangle|^2+\|u\|^2\leq \max\{1, |\alpha-1|^2\}\|u\|^2.
\end{equation*}
Hence $\|\alpha T-I\|\leq \max\{1, |\alpha-1|\}$ and for any $x, y \in \mathcal{H}$ we get 
\begin{equation*}\label{newMox}
	|\langle  \left(\alpha T-I\right)x, y\rangle|\leq \|T-I\|\|x\|\|y\|\leq \max\{1, |\alpha-1|\}\|x\| \|y\|.
\end{equation*}
In conclusion, we have 
\begin{equation}\label{alphabuz}
	|\alpha \langle x,z\rangle \langle z, y\rangle -\langle x, y\rangle|\leq \max\{1, |\alpha-1|\}\|x\|\|y\|,
\end{equation}
for any $x,y,z\in \mathcal{H}$ with $\|z\|=1$ and $\alpha \in \mathbb{C}.$
If $\alpha \in \mathbb{C}-\{0\}$,  then \eqref{alphabuz} is equivalent to 
\begin{equation*}
	\left| \langle x,z\rangle \langle z, y\rangle -\frac{1}{\alpha}\langle x, y\rangle\right|\leq \frac{1}{|\alpha|}\max\{1, |\alpha-1|\}\|x\|\|y\|.
\end{equation*}
From  the continuity property of modulus for complex numbers,  we  obtain 
\begin{equation*}
	|\langle x, z\rangle \langle z, y\rangle|\leq \frac{1}{|\alpha|}(|\langle x, y\rangle|+ \max\{1, |\alpha-1|\}\|x\| \|y\| ), 
\end{equation*}
for any $x, y, z \in \mathcal{H}$ with $\|z\|=1.$ The value $\alpha=2$ gives Buzano  inequality.

We note that the inequality \eqref{alphabuz} was previously obtained by Moslehian et al. (\cite{MKD}, Corollary 2.5) using properties of  singular values.


The main idea in the previous proof  was to obtain a bound for the distance between a rank one operator and the identity operator. On the other hand, Fujii and Kubo in \cite{fujiikubo} based their proof of Buzano  inequality on the fact that $\|2P-I\|\leq 1$ where $P$ is an orthogonal projection. Because of the above, we are in a position to prove our first result in this paper, which generalizes these previous ideas.

\begin{proposition}\label{generalizacion}
	Let $T\in  \mathcal{B}(\mathcal{H})$ and $\alpha \in \mathbb{C}-\{0\}$,  with $\left\|\alpha T-I\right\|\leq1$. Then, for any $x, y \in \mathcal{H}$
	\begin{equation*}
		\left |\langle Tx, y\rangle-\frac{1}{\alpha} \langle x, y\rangle\right|\leq \frac{1} {|\alpha|}\|x\| \|y\|, 
	\end{equation*}
	and 
	\begin{equation}\label{buzanogeneral}
		|\langle Tx, y\rangle| \leq\left |\langle Tx, y\rangle-\frac{1}{\alpha} \langle x, y\rangle\right|+ \frac{1}{|\alpha|} |\langle x, y\rangle|\leq  \frac{1}{|\alpha|}(|\langle x, y\rangle|+\|x\| \|y\| ).
	\end{equation}
	On the other, if $T$ fulfills 
	\begin{equation}\label{eq22}
		|\langle Tx, y\rangle| \leq \frac{1}{|\alpha|}(|\langle x, y\rangle|+\|x\| \|y\| ),
	\end{equation}
	for any $x,y\in \mathcal{H}$ and for some $\alpha\in \mathbb{C}-\{0\}$. Then, $dist(\alpha T, \mathbb{C}I)\leq 1.$

\end{proposition}

\begin{proof}
	Let  $x, y\in \mathcal{H}$ and  $\alpha \in \mathbb{C}-\{0\}$,  with $\left\|\alpha T-I\right\|\leq1$. By \eqref{CS}, we have 
	\begin{eqnarray}
		\left|\langle Tx, y\rangle -\frac{1} {\alpha} \langle x, y\rangle \right|&=&\left|\left\langle \left(T-\frac{1} {\alpha} I\right)x, y\right\rangle \right| 
		\leq \frac{1}{|\alpha|} \left\|\alpha T-I\right\| \|x\| \|y\| \:\:\: \nonumber\\
		&\leq& \frac{1}{|\alpha|} \|x\| \|y\|.
		\nonumber\
	\end{eqnarray}
	Therefore, we obtain 
	\begin{eqnarray}
		|\langle Tx, y\rangle| \leq\left |\langle Tx, y\rangle-\frac{1}{\alpha} \langle x, y\rangle\right|+ \frac{1}{|\alpha|} |\langle x, y\rangle|\leq  \frac{1}{|\alpha|}(|\langle x, y\rangle|+\|x\| \|y\| ). \nonumber
	\end{eqnarray}
	If $T$ satisfies \eqref{eq22} and we recall the formula which express the distance from $T$ to the one-dimensional
	subspace $\mathbb{C}I$ (see \cite{BS}),
	$$dist(T, \mathbb{C}I) = \sup\{|\langle Tx, y\rangle| : \|x\|=\|y\|=1, \langle x, y\rangle = 0\},$$
	then  there exists $\beta\in \mathbb{C}-\{0\}$ such that $\left\|\alpha T-\beta I\right\|\leq1$.
\end{proof}

The next example shows that not all bounded linear operator satisfies the hypothesis
of Proposition \ref{generalizacion}.

\begin{example}
	Consider the unilateral shift operator, $T:l^2({\mathbb{N}})\to l^2({\mathbb{N}}) $, defined by 
	$T(x_1, x_2, x_3, \cdots, )=(0, x_1, x_2, x_3, \cdots )$. Let $e_1=(1, 0, 0, \cdots)\in l^2({\mathbb{N}})$, then $\|e_1\|=1$ and $\langle -Te_1, e_1\rangle=0$, by Theorem 2.1 in \cite{BB}, we have that for any $\alpha\in \mathbb{C}-\{0\}$ it holds
	$$
	\|I\|^2+|\alpha|^2 m^2(-T)\leq \|I-\alpha T\|^2.
	$$
	
	As $-T$ is a left invertible operator, then $m(-T)>0$ and in consequence 
	$$
	1<\|I\|^2+|\alpha|^2 m^2(-T)\leq \|I-\alpha T\|^2.
	$$
\end{example}


For convenience, for any $\alpha \in \mathbb{C}-\{0\}$, we denote by 
$$
\mathcal{A_{\alpha}}=\{T\in \bh:  \|\alpha T-I\|\leq 1\},
$$
the set of bounded operators that fulfills the hypothesis of Proposition \ref{generalizacion}. Since we have proved that each operator belonging to the set $\mathcal{A_{\alpha}}$ satisfies a $\frac{1}{\alpha}$ Buzano-type inequality, we will call to such  set the $\frac{1}{\alpha}$ Buzano set.

Next, we collect some properties of $\mathcal{A_{\alpha}}.$

\begin{proposition}\label{propiedad}
	Let $\alpha\in \mathbb{C}-\{0\}$, then
	\begin{enumerate}
		\item $\mathcal{A_{\alpha}}$ is a non-empty convex and closed set. 
		
	\end{enumerate}
	For any $T\in  \mathcal{A_{\alpha}}$, then
	
	\begin{enumerate}
		\setcounter{enumi}{1}
		\item $\|T\|\leq \frac{2}{|\alpha|}.$
		\item $T^* \in\mathcal{A_{\overline{\alpha}}}$.
		\item If  $S\in 	 \mathcal{A_{\alpha}}$, then $T+S \in  \mathcal{A}_{\frac{\alpha}{2}}$. 
		\item If $\left\|\alpha T-I\right\|<1$, then $T\in \mathcal{GL}(\mathcal{H})$.
		\item If $T$ is self-adjoint, then $T\geq 0$ or $-T\geq 0$.
		\item If $T=h\otimes h$, $h\in \h$ and $h\neq 0$, then $\alpha=\frac{t e^{i\theta}+1}{\|h\|^2}$ for every $\theta\in[0,2\pi]$ and $t\in[-1,1]$.
		\item $d(T,\mathbb{C} I)\leq \frac{1}{|\alpha|}$ and for every $P\geq 0$ with $tr(P)=1$
		$$tr(|T|^2P)-|tr(TP)|^2\leq \frac{1}{|\alpha|}.$$
		
	\end{enumerate} 
\end{proposition}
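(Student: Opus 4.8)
The plan is to treat the eight items in order: the first five are one-line consequences of the definition and the triangle inequality, while the genuine content is in (6)--(8).

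For (1), non-emptiness follows from $\tfrac{1}{\alpha}I\in\mathcal{A}_{\alpha}$ (it gives $\|\alpha\cdot\tfrac{1}{\alpha}I-I\|=0$); convexity holds because $T\mapsto\alpha T-I$ is affine, so for $T,S\in\mathcal{A}_{\alpha}$ and $t\in[0,1]$ we get $\|\alpha(tT+(1-t)S)-I\|\le t\|\alpha T-I\|+(1-t)\|\alpha S-I\|\le1$; and closedness is the continuity of $T\mapsto\|\alpha T-I\|$. Item (2) is $\|\alpha T\|\le\|\alpha T-I\|+\|I\|\le2$, whence $\|T\|\le2/|\alpha|$. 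Item (3) combines $(\alpha T-I)^{*}=\overline{\alpha}T^{*}-I$ with $\|A^{*}\|=\|A\|$. Item (4) rewrites $\tfrac{\alpha}{2}(T+S)-I=\tfrac12(\alpha T-I)+\tfrac12(\alpha S-I)$ and applies the triangle inequality. Item (5) is the Neumann series: $\|I-\alpha T\|<1$ makes $\alpha T=I-(I-\alpha T)$ invertible, hence so is $T$.

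For (6) the key point is that, although $\alpha$ is complex, $\alpha T-I$ is \emph{normal} when $T$ is self-adjoint, since both orderings of $(\alpha T-I)$ and $(\overline{\alpha}T-I)$ multiply to $|\alpha|^{2}T^{2}-2\mathrm{Re}(\alpha)T+I$. Thus $\|\alpha T-I\|$ equals the spectral radius, and by spectral mapping the hypothesis reads $|\alpha\lambda-1|\le1$ for every $\lambda\in\sigma(T)\subseteq\R$. Squaring gives $\lambda\bigl(|\alpha|^{2}\lambda-2\mathrm{Re}(\alpha)\bigr)\le0$ for all such $\lambda$, and I would finish by casework on $\mathrm{Re}(\alpha)$: if $\mathrm{Re}(\alpha)>0$ then $0\le\lambda\le2\mathrm{Re}(\alpha)/|\alpha|^{2}$ so $T\ge0$; if $\mathrm{Re}(\alpha)<0$ then $\sigma(T)\subseteq(-\infty,0]$ so $-T\ge0$; if $\mathrm{Re}(\alpha)=0$ only $\lambda=0$ survives and $T=0$. (Compactness merely makes this spectral picture transparent; the same argument works for any bounded self-adjoint $T$.) Item (7) is this computation for the positive rank-one $T=h\otimes h$, whose spectrum is $\{0,\|h\|^{2}\}$: the constraint reduces to $|\alpha\|h\|^{2}-1|\le1$, and writing $\alpha\|h\|^{2}-1=te^{i\theta}$ with $t\in[-1,1]$, $\theta\in[0,2\pi]$ produces exactly $\alpha=(te^{i\theta}+1)/\|h\|^{2}$.

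For (8), the distance bound is immediate by taking the competitor $\beta=\tfrac1\alpha$: $d(T,\mathbb{C}I)\le\|T-\tfrac1\alpha I\|=\tfrac1{|\alpha|}\|\alpha T-I\|\le\tfrac1{|\alpha|}$. For the trace inequality I would use the variance identity: with $\mu=\mathrm{tr}(TP)$ and $P\ge0$, $\mathrm{tr}(P)=1$, expanding and using $\mathrm{tr}(T^{*}P)=\overline{\mathrm{tr}(TP)}$ gives $\mathrm{tr}(|T|^{2}P)-|\mathrm{tr}(TP)|^{2}=\mathrm{tr}\bigl(|T-\mu I|^{2}P\bigr)$, which is in fact the minimum over $\beta$ of $\mathrm{tr}(|T-\beta I|^{2}P)$. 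Inserting the fixed scalar $\beta=\tfrac1\alpha$ and using $|T-\tfrac1\alpha I|^{2}\le\|T-\tfrac1\alpha I\|^{2}I$ against the state $P$ yields $\mathrm{tr}(|T|^{2}P)-|\mathrm{tr}(TP)|^{2}\le\|T-\tfrac1\alpha I\|^{2}\le\tfrac1{|\alpha|^{2}}$; one can equally read this off Paul's formula $M_{I}(T)=d(T,\mathbb{C}I)$ with $P=x\otimes x$. I expect (8) to be the main obstacle, precisely because one is tempted to bound by $\|T-\mu I\|$ (the wrong direction, since $\mu$ minimizes the trace but not the norm); inserting $\tfrac1\alpha$ instead is the decisive move, and it shows the natural right-hand side is $1/|\alpha|^{2}$ rather than $1/|\alpha|$.
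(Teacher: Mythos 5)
Your items (1)--(5) and (7) are correct and essentially match the paper: the paper declares (2)--(5) ``trivial,'' uses the witness $\frac{2}{3\alpha}I$ instead of your $\frac{1}{\alpha}I$ for non-emptiness, and handles (7) exactly as you do via the two-point spectrum $\{\|h\|^2,0\}$. For (6) you take a genuinely different route. The paper diagonalizes the compact self-adjoint $T$ as $U^*DU$ and argues geometrically that the disk $\{w:|w-1|\le 1\}$ cannot contain both $\alpha\lambda_j$ and $\alpha\lambda_k$ with $\lambda_j<0<\lambda_k$ unless $\operatorname{Re}(\alpha)=0$ and hence $\alpha=0$. You instead observe that $\alpha T-I$ is normal, so its norm is its spectral radius, the hypothesis becomes $|\alpha\lambda-1|\le 1$ on $\sigma(T)$, and squaring gives $\lambda\bigl(|\alpha|^2\lambda-2\operatorname{Re}(\alpha)\bigr)\le 0$, which confines $\sigma(T)$ to a single half-line. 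This is cleaner, avoids the diagonalization, and, as you note, does not use compactness at all.

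The substantive divergence is in (8). The distance bound $d(T,\mathbb{C}I)\le\frac{1}{|\alpha|}$ is proved identically in both. For the variance, you establish $\mathrm{tr}(|T|^2P)-|\mathrm{tr}(TP)|^2\le\|T-\tfrac{1}{\alpha}I\|^2\le\frac{1}{|\alpha|^2}$, whereas the paper asserts the bound $\frac{1}{|\alpha|}$ by citing \cite[Prop.\ 3.1]{BC}. Your bound implies the stated one only when $|\alpha|\ge 1$; for $|\alpha|<1$ it is weaker --- but the stated bound is in fact false there, so your version is the correct one. Indeed, take $\alpha\in(0,1)$, $T=\frac{2}{\alpha}Q$ for a nontrivial orthogonal projection $Q$ (so $\|\alpha T-I\|=\|2Q-I\|=1$ and $T\in\mathcal{A}_{\alpha}$), and $P=\frac12(e_1\otimes e_1+e_2\otimes e_2)$ with unit vectors $e_1\in\mathcal{R}(Q)$, $e_2\in\mathcal{N}(Q)$; then $\mathrm{tr}(|T|^2P)-|\mathrm{tr}(TP)|^2=\frac{2}{\alpha^2}-\frac{1}{\alpha^2}=\frac{1}{\alpha^2}>\frac{1}{\alpha}$, which violates the paper's inequality and saturates yours. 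So your proof does not prove the statement as literally written, but that is because the statement has an error (the right-hand side should be $\frac{1}{|\alpha|^2}$, i.e.\ the square of the distance bound); your variance identity $\mathrm{tr}(|T|^2P)-|\mathrm{tr}(TP)|^2=\min_{\beta}\mathrm{tr}(|T-\beta I|^2P)$ and the choice $\beta=\frac{1}{\alpha}$ are exactly the right repair.
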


\begin{proof}
	
	(1) Let $T,S \in \mathcal{A_{\alpha}}$ and $\lambda \in [0,1]$, then 
	\begin{eqnarray}
		\|\alpha(\lambda T+(1-\lambda)S)-I\|&\leq& \|\alpha \lambda T-\lambda I\|+\|\alpha (1-\lambda)S-(1-\lambda) I\|\nonumber \\
		&=&\lambda\|\alpha  T- I\|+(1-\lambda)\|\alpha S- I\|\nonumber\\
		&\leq& \lambda+1-\lambda=1. \nonumber \
	\end{eqnarray}
	This shows that $\lambda T+(1-\lambda)S \in \mathcal{A_{\alpha}}$ and therefore, $\mathcal{A_{\alpha}}$ is convex.
	
	Now, let  $\{T_n\}$ be a sequence in $\mathcal{A_{\alpha}}$ such  that converges to $T\in \bh$. We must show that $T\in \mathcal{A_{\alpha}}$. Then, we have for any $n\in \mathbb N$ that it hold 
	\begin{eqnarray}
		\|\alpha T-I\|=\|\alpha T-\alpha T_n+\alpha T_n-I\|\leq |\alpha|\|T_n-T\|+1. \nonumber \
	\end{eqnarray}
	Taking limit when $n$ tends to infinity we obtain $\|\alpha T-I\|\leq 1$, i.e. $T\in \mathcal{A_{\alpha}}$. 
	
	The proof of items (2), (3), (4), and (5) are trivial.
	
	(6) If $T$ is normal, then $(\alpha T-I)^*(\alpha T-I)=(\alpha T-I)(\alpha T-I)^*$, for every $\alpha\in \C$. Therefore, $\alpha T-I$ is normal and 
	$$r(\alpha T-I)=\omega(\alpha T-I)=\|\alpha T-I\|,$$
	where $r(\alpha T-I)=\sup\{|\beta|: \beta\in \sigma(\alpha T-I) \}$ is the spectral radius. By the spectral theorem 
	$$\|\alpha T-I\|=r(\alpha T-I)=\sup\{|\alpha \lambda-1|: \lambda\in \sigma(T) \}.$$
	Thus, if $T$ is selfadjoint and $T\in\mathcal{A}_{\alpha}$
	$$|\alpha \lambda-1|\leq 1,$$
	for all $\lambda\in \sigma(T)$.
	Therefore, each $\alpha \lambda$ must lie in the unit disk in the complex plane centered in $z=1$. Also,
	$$\lambda\in \R \text{ for every } \lambda\in \sigma(T)\Rightarrow\left\lbrace \begin{array}{ll}
		Re(\lambda\alpha)=\lambda Re(\alpha)\in&[0,2]\\
		Im(\lambda\alpha)=\lambda Im(\alpha)\in&[-1,1]
	\end{array} \right.  $$
	Suppose there exist $\lambda_j,\lambda_k\in \sigma(T)$ such that $\lambda_j<0$ and $\lambda_k>0$, then
	$$Re(\lambda_j\alpha)\in[0,2]\Rightarrow0\geq Re(\alpha)\geq \dfrac{2}{\lambda_j}$$
	and
	$$Re(\lambda_k\alpha)\in[0,2]\Rightarrow0\leq Re(\alpha)\leq \dfrac{2}{\lambda_k}.$$
	Thus, $Re(\alpha)=0$, which means that $\lambda_j\alpha$ is pure imaginary and $\alpha=0$ (because the unit disk in the complex plane centered in $z=1$ intersects the imaginary axis only in $z=0$). This is a contradiction, so $\lambda_j$ and $\lambda_k$ have the same sign.

	(7) For any non-zero $h\in \h$, $T=h\otimes h$  is a  compact, positive operator and its spectrum has only two eigenvalues, $\|h\|^2$ and $0$. Then, by the proof of item (6)
	$$\left| \alpha \|h\|^2-1 \right| \leq1$$
	if and only if $\alpha\|h\|^2=te^{i\theta}+1$, with $|t|\leq 1$ and $\theta\in [0,2\pi] $.
	
	(8) $1\geq \|\alpha T-I\|=|\alpha|\|T-\frac{1}{\alpha}I\|\geq |\alpha|d(T,\mathbb{C} I)\Rightarrow \frac{1}{|\alpha|}\geq d(T,\mathbb{C} I)$ and using Proposition 3.1 in \cite{BC} we obtain that 
	$$tr(|T|^2P)-|tr(TP)|^2\leq \frac{1}{|\alpha|}$$
	for every $P\geq 0$ with $tr(P)=1.$

\end{proof}

Next, we enumerate other properties of $\mathcal{A_{\alpha}}$ related to $dist(I, \mathbb{C}T)$ and the center of mass of $T$.
\begin{lemma} Let $T\in \mathcal{B}(\mathcal{H})$.
	\begin{enumerate}
		\item If $m(T)>0$ and $c(T)\neq0$, then $T\in \mathcal{A}_{c(T)}.$ 
		\item If $m(T)>0$ and $c(T)=0$, then $I$ is Birkhoff-James orthogonal to $T$. We deduce that $1<\|\alpha T-I\|$  and $T\notin \mathcal{A_{\alpha}}$, for every $\alpha\in \mathbb{C}-\{0\}$.	
		\item If  $dist(I, \mathbb{C}T)=1$ and there exists $\alpha_0 \in \mathbb{C}-\{0\}$ such that 
		$$1=\|\alpha_0T-I\|<\|\alpha T-I\|, \text{ for all } \alpha\in \C-\{\alpha_0,0\},$$
		then $0\in \sigma_{app}(T)$ and $T\in \mathcal{A}_{\alpha_0}$.
	\end{enumerate}
\end{lemma}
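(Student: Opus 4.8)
The plan is to prove the three items separately, relying throughout on the notion of the center of mass $c(T)$ introduced via Stampfli's work and on the Birkhoff--James characterization of best approximation. For item (1), suppose $m(T)>0$ and $c(T)\neq 0$. By definition $c(T)$ realizes $dist(I,\mathbb{C}T)=\|c(T)T-I\|$, so I would argue that this infimum cannot exceed $1$: the trivial competitor $\gamma=0$ gives $\|0\cdot T-I\|=\|I\|=1$, hence $dist(I,\mathbb{C}T)\leq 1$ and therefore $\|c(T)T-I\|\leq 1$. Since $c(T)\neq 0$ by assumption, this says exactly $T\in\mathcal{A}_{c(T)}$. This is the easy case.

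For item (2), assume $m(T)>0$ and $c(T)=0$. The equation $c(T)=0$ means the best scalar multiple of $T$ approximating $I$ is the zero operator, i.e.\ $\inf_{\gamma}\|\gamma T-I\|=\|I\|=1$ is attained at $\gamma=0$. The key idea is to translate this minimality into a Birkhoff--James orthogonality statement. Recall $A$ is Birkhoff--James orthogonal to $B$ when $\|A\|\leq\|A+\lambda B\|$ for all scalars $\lambda$. Taking $A=I$ and $B=T$, the fact that $\gamma=0$ minimizes $\|\gamma T - I\| = \|{-I}+\gamma T\|$ gives precisely $\|I\|\leq\|I-\gamma T\|$ for every $\gamma$, so $I\obj T$. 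I would then note that since $m(T)>0$ guarantees uniqueness of the minimizer $\gamma_0=c(T)=0$, the inequality is strict away from $\gamma=0$: for every $\alpha\in\mathbb{C}-\{0\}$ we get $1=\|I\|<\|\alpha T-I\|$, whence $T\notin\mathcal{A}_\alpha$ for all such $\alpha$.

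For item (3), the hypothesis is $dist(I,\mathbb{C}T)=1$ with a strict unique minimizer $\alpha_0\neq 0$ satisfying $\|\alpha_0 T-I\|=1<\|\alpha T-I\|$ for all other nonzero $\alpha$. The plan is to combine this with the characterization of $m(T)$ via the approximate point spectrum. I expect the main obstacle here: I must show $0\in\sigma_{app}(T)$, equivalently $m(T)=0$. The natural route is by contraposition using the uniqueness theory already invoked. If instead $m(T)>0$, then by the uniqueness of the center of mass the scalar $\gamma_0$ achieving $dist(I,\mathbb{C}T)$ is unique, which is consistent, so I would look more carefully at comparing the two candidate minimizers $\alpha_0$ and $0$. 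Since $\gamma=0$ yields $\|{-I}\|=1=dist(I,\mathbb{C}T)$, the value $0$ is \emph{also} a minimizer; but the hypothesis forces $\alpha_0$ to be the strict unique minimizer over all $\mathbb{C}-\{0\}$ and $\|\alpha_0 T-I\|=1=\|0\cdot T-I\|$, so $0$ and $\alpha_0$ are two distinct points both attaining the distance $1$. Uniqueness of the center of mass (which holds precisely when $m(T)>0$) would then be violated, forcing $m(T)=0$, i.e.\ $0\in\sigma_{app}(T)$. Finally $\|\alpha_0 T-I\|=1$ gives directly $T\in\mathcal{A}_{\alpha_0}$. The delicate point to verify is that the two minimizers $0$ and $\alpha_0$ are genuinely distinct, which follows from $\alpha_0\in\mathbb{C}-\{0\}$.
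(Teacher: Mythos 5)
The paper states this lemma without proof, so there is no authorial argument to compare against; your proposal is correct and is clearly the intended reasoning. All three items come down to the observation that the trivial competitor $\gamma=0$ gives $\|0\cdot T-I\|=\|I\|=1$, combined with the uniqueness of the minimizing scalar when $m(T)>0$ and the equivalence $m(T)>0\iff 0\notin\sigma_{app}(T)$ --- exactly the three ingredients you use, including the key point in item (3) that $0$ and $\alpha_0$ would be two distinct minimizers, which forces $m(T)=0$.
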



\begin{proposition} 
	Let $T\in \bh$ such that $m(T)>0$, $c(T)\neq 0$ and  $dist(T,\mathbb CI)=\|T\|,$ then $\|I-c(T)T\|=1$ and, in particular, $T\in \mathcal{A}_{c(T)}$.
\end{proposition}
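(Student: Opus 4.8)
The plan is to reduce the whole statement to the single equality $dist(I,\mathbb{C}T)=1$. Indeed, since $m(T)>0$ the center of mass $c(T)$ is the (unique) scalar attaining the infimum defining $dist(I,\mathbb{C}T)$, so $\|I-c(T)T\|=\|c(T)T-I\|=dist(I,\mathbb{C}T)$; moreover, taking $\gamma=0$ shows $dist(I,\mathbb{C}T)\le\|{-I}\|=1$ unconditionally. Thus it suffices to prove the reverse bound $dist(I,\mathbb{C}T)\ge 1$, after which $\|I-c(T)T\|=1$ is immediate and, because $c(T)\neq0$, the membership $T\in\mathcal{A}_{c(T)}$ follows at once from $\|c(T)T-I\|=1\le1$. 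To produce this lower bound I would express both distances through Paul's identity $M_B(A)=dist(A,\mathbb{C}B)$, which is available here since $m(I)=1>0$ and $m(T)>0$.

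First I would unpack the hypothesis $dist(T,\mathbb{C}I)=\|T\|$. Applying Paul's identity with $B=I$ and $A=T$, and using that $\langle Tx,x\rangle x$ is the orthogonal projection of $Tx$ onto a unit vector $x$, gives
\[
dist(T,\mathbb{C}I)^2=M_I(T)^2=\sup_{\|x\|=1}\bigl(\|Tx\|^2-|\langle Tx,x\rangle|^2\bigr).
\]
So the hypothesis reads $\sup_{\|x\|=1}\bigl(\|Tx\|^2-|\langle Tx,x\rangle|^2\bigr)=\|T\|^2$. Choosing unit vectors $x_n$ along which this supremum is approached, and using that every term obeys $\|Tx_n\|^2-|\langle Tx_n,x_n\rangle|^2\le\|Tx_n\|^2\le\|T\|^2$, I would squeeze out the two limits $\|Tx_n\|\to\|T\|$ and $\langle Tx_n,x_n\rangle\to0$ simultaneously; note that $m(T)>0$ forces $T\neq0$, hence $\|T\|>0$. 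This is precisely the Bhatia--\v{S}emrl type description of the Birkhoff--James orthogonality $T\perp_{B}I$ concealed in the hypothesis.

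Next I would compute the target distance with Paul's identity using $B=T$ (legitimate exactly because $m(T)>0$, which keeps every denominator $\ge m(T)^2$) and $A=I$:
\[
dist(I,\mathbb{C}T)^2=M_T(I)^2=\sup_{\|x\|=1}\Bigl(1-\frac{|\langle Tx,x\rangle|^2}{\|Tx\|^2}\Bigr)=1-\inf_{\|x\|=1}\frac{|\langle Tx,x\rangle|^2}{\|Tx\|^2}.
\]
Feeding the sequence $\{x_n\}$ from the previous step into the infimum, and using $\langle Tx_n,x_n\rangle\to0$ together with $\|Tx_n\|\ge m(T)>0$, the ratio tends to $0$, so the infimum equals $0$ and $dist(I,\mathbb{C}T)=1$, which closes the argument as explained above.

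The step I expect to be the main obstacle is the passage from the equality $\sup_{\|x\|=1}\bigl(\|Tx\|^2-|\langle Tx,x\rangle|^2\bigr)=\|T\|^2$ to the two separate limiting statements about $\{x_n\}$: one must argue with maximizing sequences rather than a maximizing vector, since in infinite dimensions the supremum need not be attained, and one must track that it is exactly the condition $m(T)>0$ that prevents the ratio $|\langle Tx,x\rangle|^2/\|Tx\|^2$ from degenerating. A secondary point worth checking is the compatibility of the conclusion $\|I-c(T)T\|=1$ with the uniqueness of the minimizer $c(T)$ guaranteed by $m(T)>0$; once the joint limits are secured, the remaining substitution is routine.
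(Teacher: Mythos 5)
Your proof is correct and follows essentially the same route as the paper: both rest on Paul's identity applied to each of $dist(T,\mathbb{C}I)$ and $dist(I,\mathbb{C}T)$, together with the trivial upper bound $dist(I,\mathbb{C}T)\le\|I\|=1$ and the fact that $m(T)>0$ makes $c(T)$ attain that distance. The only difference is organizational: the paper gets the lower bound $dist(I,\mathbb{C}T)\ge 1$ from the pointwise inequality $\|Tx\|^2-|\langle Tx,x\rangle|^2\le\bigl(1-\inf_{\|y\|=1}|\langle Ty,y\rangle|^2/\|Ty\|^2\bigr)\|Tx\|^2$ followed by a supremum, whereas you extract a norming sequence with $\langle Tx_n,x_n\rangle\to 0$ and feed it into the infimum --- the same content, carried out via a maximizing sequence.
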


\begin{proof}
	Let $x\in \mathcal{H}$ such that $\|x\|=1$, then $|\langle Tx, x\rangle|\geq \|Tx\|\inf_{\|y\|=1}\frac{|\langle Ty, y\rangle|}{\|Ty\|}$ and 
	$$
	\left[\|Tx\|^2-|\langle Tx, x\rangle|^2 \right]^{1/2}\leq \left(1-\inf_{\|y\|=1}\frac{|\langle Ty, y\rangle|^2}{\|Ty\|^2}\right)^{1/2}\|Tx\|.
	$$
	Calculating the supremum of both sides, and using the equality \eqref{paul}, we get
	$$
	\|T\|=dist(T,\mathbb CI)\leq dist(I,\mathbb CT)\|T\|.
	$$
	Then $1\leq  dist(I,\mathbb{C}T) \leq \|I\|=1$, i.e. $dist(I,\mathbb{C}T)=\|I-c(T)T\|=1$. This completes the proof.
\end{proof}
As we have shown in Proposition \ref{propiedad}, if $T\in \mathcal{A_{\alpha}}$ with $\|\alpha T-I\|<1$, then $T\in \mathcal{GL}(\mathcal{H})$ and $T$ verifies Proposition \ref{generalizacion}. Now, we obtain a generalization of such statement for any invertible operator. In order to prove it, we need the following result.

\begin{lemma}[Corollary 3.7, \cite{BD}]\label{Bracic-Diogo}
	If $T\in \mathcal{GL}(\mathcal{H})$, then there exists a unitary operator $U\in \bh$ and a non-zero complex number $\beta$ such that $\|\beta T^{-1}-U^*\|<1.$
\end{lemma}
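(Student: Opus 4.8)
The plan is to reduce the statement to an estimate for a single positive invertible operator by means of the polar decomposition, and then to settle that estimate by elementary spectral calculus.

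First I would invoke the polar decomposition $T=V|T|$. Since $T\in\mathcal{GL}(\mathcal{H})$, the operator $T^{*}T$ is positive and invertible, hence $|T|=(T^{*}T)^{1/2}$ is positive and invertible; a direct computation, $V^{*}V=|T|^{-1}T^{*}T|T|^{-1}=I$ and $VV^{*}=T(T^{*}T)^{-1}T^{*}=I$, then shows that the partial isometry $V$ is in fact \emph{unitary}. Inverting the factorization gives $T^{-1}=|T|^{-1}V^{*}$, a factorization of $T^{-1}$ with a positive invertible left factor.

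Next I would make the natural choice $U=V$, so that $U^{*}=V^{*}$, and compute
\[
\|\alpha T^{-1}-U^{*}\|=\|(\alpha|T|^{-1}-I)V^{*}\|=\|\alpha|T|^{-1}-I\|,
\]
the final equality holding because $V^{*}$ is unitary. Thus the whole problem collapses to producing a nonzero scalar $\alpha$ with $\|\alpha|T|^{-1}-I\|<1$. To finish, set $P:=|T|^{-1}$, a positive invertible operator whose spectrum is a compact subset of $(0,\infty)$, say $\sigma(P)\subseteq[a,b]$ with $0<a\le b$. Choosing the real scalar $\alpha=2/(a+b)$, the self-adjoint operator $\alpha P-I$ has spectrum contained in $[\alpha a-1,\alpha b-1]=[-(b-a)/(a+b),\,(b-a)/(a+b)]$, so by the spectral radius formula for self-adjoint operators $\|\alpha P-I\|\le (b-a)/(a+b)<1$, the strict inequality holding precisely because $a>0$. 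This yields the required unitary $U$ and nonzero $\alpha$.

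The argument is short, and there is no serious obstacle; the one point deserving care is the verification that the polar-decomposition partial isometry is genuinely unitary rather than a proper partial isometry, which is exactly where the invertibility of $T$ enters. Once $T^{-1}$ is written with a positive left factor, the remaining spectral estimate is routine, and in particular a real $\alpha$ already suffices, even though the statement only demands a complex one.
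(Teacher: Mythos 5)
Your argument is correct in every step: for invertible $T$ the polar isometry $V=T|T|^{-1}$ is indeed unitary, the identity $\|\alpha T^{-1}-V^{*}\|=\|\alpha|T|^{-1}-I\|$ follows from unitary invariance of the norm, and the spectral estimate $\|\alpha P-I\|\le (b-a)/(a+b)<1$ for $\alpha=2/(a+b)$ is the standard optimal scaling of a positive invertible operator toward the identity. Note, however, that the paper does not prove this lemma at all: it is imported verbatim as Corollary 3.7 of Bra\v{c}i\v{c}--Diogo \cite{BD}, where it is obtained by rather different machinery (the theory of relative numerical ranges, i.e.\ the sets $\{\langle Tx,Ux\rangle:\|x\|=1\}$, developed in that paper). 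So you are not reproducing the cited proof but replacing it with a short, self-contained one. Your route buys several things the citation does not make visible: the unitary $U$ can always be taken to be the polar factor $V$ of $T$ itself, the scalar $\alpha$ can be taken real and positive, and the construction is quantitative --- with $a=1/\|T\|$ and $b=\|T^{-1}\|$ one gets the explicit bound $\|\alpha T^{-1}-U^{*}\|\le(\kappa-1)/(\kappa+1)$ in terms of the condition number $\kappa=\|T\|\,\|T^{-1}\|$, and $\alpha=2/(a+b)\le 2/\|T^{-1}\|$, which is exactly the bound asserted without proof in the remark following Theorem \ref{inv}. The only price is that your argument is specific to this statement, whereas \cite{BD} proves a more general result about approximating $U^{*}$ by multiples of arbitrary operators with $0\notin\overline{W(T,U)}$; for the purposes of Theorem \ref{inv} your elementary proof is entirely adequate and arguably preferable.
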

Observe that in the value $\beta$ in Lemma \ref{Bracic-Diogo} satisfies that $|\beta|< \frac{2}{\|T^{-1}\|}$.

\begin{theorem}\label{inv}
	Let $T\in \mathcal{GL}(\mathcal{H})$, then there exists a unitary operator $U\in \bh$ and a non-
	zero complex number $\beta$ as in Lemma \ref{Bracic-Diogo} such that  for any $x, y \in \mathcal{H}$
	\begin{equation*}
		\left |\langle T^{-1}x, y\rangle-\frac{1}{\beta} \langle U^*x, y\rangle\right|\leq \frac{1} {|\beta|}\|x\| \|y\|, 
	\end{equation*}
	and 
	\begin{equation*}
		|\langle T^{-1}x, y\rangle| \leq\left |\langle T^{-1}x, y\rangle-\frac{1}{\beta} \langle U^*x, y\rangle\right|+ \frac{1}{|\beta|} |\langle U^* x, y\rangle|\leq  \frac{1}{|\beta|}(|\langle U^* x, y\rangle|+\|x\| \|y\| ).
	\end{equation*}
\end{theorem}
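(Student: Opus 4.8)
The plan is to run essentially the same argument used to prove \cref{generalizacion}, but with the identity operator $I$ replaced by the unitary $U^*$ supplied by \cref{Bracic-Diogo}. First I would invoke \cref{Bracic-Diogo}: since $T\in\mathcal{GL}(\mathcal{H})$, there exist a unitary $U\in\bh$ and a nonzero $\alpha\in\mathbb{C}$ with $\|\alpha T^{-1}-U^*\|<1$. These are exactly the $U$ and $\alpha$ whose existence the theorem asserts, so the entire proof is reduced to exploiting this single norm bound.

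For the first displayed inequality, I would factor the left-hand side as a single inner product and apply the Cauchy--Schwarz inequality. Writing
\[
\left|\langle T^{-1}x, y\rangle-\tfrac{1}{\alpha}\langle U^*x, y\rangle\right|
=\left|\left\langle\left(T^{-1}-\tfrac{1}{\alpha}U^*\right)x,\,y\right\rangle\right|
\leq\left\|T^{-1}-\tfrac{1}{\alpha}U^*\right\|\,\|x\|\,\|y\|,
\]
and then pulling out the scalar $\tfrac{1}{\alpha}$ so that $\big\|T^{-1}-\tfrac{1}{\alpha}U^*\big\|=\tfrac{1}{|\alpha|}\|\alpha T^{-1}-U^*\|$, the bound $\|\alpha T^{-1}-U^*\|<1\leq 1$ from \cref{Bracic-Diogo} immediately yields the claimed estimate $\tfrac{1}{|\alpha|}\|x\|\|y\|$.

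For the second chain, I would insert and subtract $\tfrac{1}{\alpha}\langle U^*x,y\rangle$ inside $|\langle T^{-1}x,y\rangle|$ and apply the triangle inequality for the modulus, exactly as in \cref{generalizacion}. This gives the first middle term $\big|\langle T^{-1}x,y\rangle-\tfrac{1}{\alpha}\langle U^*x,y\rangle\big|$ plus $\tfrac{1}{|\alpha|}|\langle U^*x,y\rangle|$; bounding the former by the first inequality then produces the final upper bound $\tfrac{1}{|\alpha|}\big(|\langle U^*x,y\rangle|+\|x\|\|y\|\big)$.

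There is no genuine obstacle in the final step: the argument is a verbatim transcription of the proof of \cref{generalizacion} with $U^*$ in place of $I$, and everything reduces to Cauchy--Schwarz and the triangle inequality. All the real content is absorbed into \cref{Bracic-Diogo}, which guarantees that a suitable unitary $U$ and scalar $\alpha$ exist even when $T^{-1}$ itself does not lie in any $\mathcal{A_{\alpha}}$; this is precisely what replaces the hypothesis $\|\alpha T-I\|\leq 1$ of \cref{generalizacion} and lets the result hold for every invertible $T$.
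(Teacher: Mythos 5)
Your proof is correct and is exactly the argument the paper intends: the paper's own proof of this theorem is simply ``analogous to the proof of Theorem \ref{generalizacion}, so we omit it,'' and you have filled in precisely that argument, using \cref{Bracic-Diogo} to supply $U$ and $\alpha$ and then running Cauchy--Schwarz and the triangle inequality with $U^*$ in place of $I$. No issues.
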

\begin{proof}
	It is analogous to the proof of Proposition \ref{generalizacion}, so we omit it.
\end{proof}

\subsection{Bounded Linear operators which belong to $\mathcal{A}_{\alpha}$}\label{s4}

We begin this subsection showing when a normal operator belongs to $\mathcal{A_{\alpha}}$.
\begin{theorem}\label{alpha normales}
	Let $T$ be a normal operator in $\bh-\{0\}$, such that $\sigma(T)$ is fully included into an arc of the disk of radius $\|T\|$ and centered in the origin, with central angle less than $\pi$. \\	
	Then, $T\in \mathcal{A}_{\alpha}$ for every $\alpha\in \C$ such that
	\begin{equation}\label{normal1}
		\arg(\alpha)+\arg(\lambda)\in\left(-\frac{\pi}{2},\frac{\pi}{2}\right),\ \text{for all } \lambda\in \sigma(T)
	\end{equation}
	and 
	\begin{equation}\label{normal2}
		|\alpha|\leq \frac{2}{\|T\|}\min_{\lambda\in \sigma(T)}\cos(\arg(\alpha)+\arg(\lambda)).
	\end{equation}
\end{theorem}
\begin{proof}
	Since $T$ is normal, $\alpha T-I$ is a normal operator, then
	$$\|\alpha T-I\|=r(\alpha T-I)=\sup\{|\alpha \lambda-1|: \lambda\in \sigma(T) \}\leq 1$$
	if and only if $|\alpha \lambda-1|\leq 1$ for every $\lambda\in \sigma(T)$.
	This is equivalent to find if there exists $\alpha\in \C$ such that 
	\begin{equation}\label{cond less 1}
		|\alpha \lambda-1|^2\leq 1\ \text{ for every } \lambda \in\sigma(T).
	\end{equation}
	Taking $\alpha=|\alpha| e^{i\theta}$ and $\lambda=|\lambda|e^{i\varphi_{\lambda}}$, with $|\lambda|\leq \|T\|$, 
	\begin{eqnarray*}
		\left| |\alpha||\lambda|e^{i(\theta+\varphi_{\lambda})}-1\right|^2&=&\left(|\alpha||\lambda|\cos(\theta+\varphi_{\lambda})-1 \right) ^2+\left(|\alpha||\lambda| \sin(\theta+\varphi_{\lambda})\right)^2\\
		&=&|\alpha|^2|\lambda|^2-2|\alpha||\lambda|\cos(\theta+\varphi_{\lambda})+1
	\end{eqnarray*}
	and we can rewrite \eqref{cond less 1} as follows
	\begin{eqnarray*}
		|\alpha||\lambda|\left(|\alpha||\lambda|-2\cos(\theta+\varphi_{\lambda})\right) &\leq& 0.\\
	\end{eqnarray*}
	Then, we arrive to the following condition
	\begin{equation}\label{cond alpha}
		|\alpha||\lambda|-2\cos(\theta+\varphi_{\lambda}) \leq 0. 
	\end{equation}
	Take an $\alpha\in \C$ that satisfies \eqref{normal1} and \eqref{normal2}.
	For $\lambda=0$ it is immediate that $\alpha$ satisfies condition \eqref{cond alpha}. Consider $\lambda\neq 0$, then
	$\cos(\arg(\alpha)+\varphi_{\lambda}) > 0$  for every $\lambda\in \sigma(T)$ and
	$$|\alpha|\leq \frac{2}{\|T\|}\min_{\lambda\in \sigma(T)}\cos(\arg(\alpha)+\arg(\lambda))
	\leq \frac{2}{|\lambda|}\cos(\arg(\alpha)+\arg(\lambda)), \lambda\neq 0.$$
	Therefore, $\alpha$ fulfills the condition \eqref{cond alpha} for every $\lambda\in \sigma(T)$ and we conclude that $\|\alpha T-I\|\leq 1$ ($T\in \mathcal{A}_{\alpha}$).
\end{proof}

In order to fulfill \eqref{cond alpha} and  $\cos(\arg(\alpha)+\varphi_{\lambda})\geq 0$,  it is a necessary condition that the spectrum of $T$ lies in into an arc of the disk of radius $\|T\|$ and centered in the origin, with central angle less than $\pi$. Otherwise, it is not possible to fix any $\alpha$ such that the property holds. Additionally, we exclude $\arg(\alpha)+\varphi_{\lambda}=\pm \frac{\pi}{2}$, since $|\alpha||\lambda|= 0$  if and only if $\lambda=0$ or $\alpha=0$.

For example, if $T$ is Hermitian $\varphi_{\lambda}\in \{0,\pi\}$ it can be seen that there is no $\alpha$ that \eqref{cond alpha} holds,  unless $\lambda\geq 0$ for every $\lambda\in \sigma(T)$ ($\varphi_{\lambda}=0$), or $\lambda\leq 0$ for every $\lambda\in \sigma(T)$ ($\varphi_{\lambda}=\pi$). Thus, the unique Hermitian operators $T$ that can reach \eqref{cond alpha} are semidefinite positive or semidefinite negative, as we show in item 6 of Proposition \ref{propiedad}.

In particular, for positive operators, we arrive to the following result.
\begin{corollary}\label{positivosgral}
	If $T\in \mathcal{B}(\mathcal{H})^+$, then $T\in \mathcal{A}_{\alpha}$ for every $\alpha\in \C$ such that $|\alpha|\leq \frac{a}{\|T\|}\leq \frac{2}{\|T\|}$ and $\cos(\arg(\alpha))\geq \frac{a}{2}$.	
\end{corollary}
\begin{proof}
	As we mention before, in this case $\varphi_{\lambda}=0$ for every $\lambda\in \sigma(T)$. Then,
	$$|\alpha||\lambda|-2\cos(\arg(\alpha))\leq \frac{a}{\|T\|}|\lambda|-2\cos(\arg(\alpha))\leq a-2\cos(\arg(\alpha))\leq 0. $$ 
\end{proof}

The next result is a generalization of Buzano inequality for any bounded linear operator.

\begin{theorem}\label{positivo}
	Let $T \in \bh-\{0\}.$ Then, for any $x, y\in\h$
	\begin{equation*}
		\left |\langle Tx, Ty\rangle-\frac{\|T\|^2}{2} \langle x, y\rangle\right|\leq \frac{\|T\|^2}{2}\|x\| \|y\|, 
	\end{equation*}
	and  
	\begin{equation}\label{T^*T}
		\left|\left\langle Tx, Ty\right\rangle\right|\leq \left |\langle Tx, Ty\rangle-\frac{\|T\|^2}{2} \langle x, y\rangle\right|+ \frac{\|T\|^2}{2} |\langle x, y\rangle| \leq \frac{\|T\|^2}{2}(|\langle x, y\rangle|+\|x\| \|y\| ).
	\end{equation}
\end{theorem}
\begin{proof}
	By Corollary \ref{positivosgral}, if $S\in \bh^+-\{0\},$
	then $S\in \mathcal{A}_{\frac{2}{\|S\|}}$.  In particular, if we consider the positive operator $S=T^*T$, then we conclude that $T^*T\in \mathcal{A}_{\frac{2}{\|T\|^2}}$ and the proof is complete as a consequence of  Proposition \ref{generalizacion} 
\end{proof}
The constant $\frac{\|T\|^2}{2}$ is best possible in \eqref{T^*T}. 
Now, if we assume that \eqref{T^*T} holds with a constant $C>0$, i.e.
\begin{equation*}\label{dragomirC}
	\left|\left\langle Tx, Ty\right\rangle\right|\ \leq C(|\langle x, y\rangle|+\|x\| \|y\| ),
\end{equation*}
for any $T\in \bh$. So, if we choose $x=y$, then 
$\|Tx\|^2 \leq 2C\|x\|^2$ and we deduce that $2C\geq \|T\|^2.$ Thus, \eqref{T^*T} is an improvement and refinement of
\begin{equation*}\label{Dragomirgeneral}
	\left|\left\langle Tx, Ty\right\rangle\right|\leq  \frac{\|T\|^2}{2}(|\langle x, y\rangle|+\|x\| \|y\| ),
\end{equation*}
which was obtained in a different way earlier by Dragomir in \cite{Dra17} using a non-negative Hermitian form on a Hilbert space. 

From the polar decomposition of any bounded linear operator and the main idea used in the proof of Theorem \ref{positivo}, we have the following statement.
\begin{corollary}
	Let $T\in  \mathcal{B}(\mathcal{H})$ and $x, y \in \mathcal{H}$. Then, 
	\begin{eqnarray}\label{buzanogeneral2}
		|\langle Tx, y\rangle|&=&	
		|\langle |T|x, V^* y\rangle|\leq
		\left |\langle Tx, y\rangle-\frac{\|T\|}{2} \langle x, V^*y\rangle\right|+ \frac{\|T\|}{2}  |\langle x, V^* y\rangle|\nonumber \\
		&\leq&   \frac{\|T\|}{2}(|\langle x, V^*y\rangle|+\|x\| \|V^*y\| )\nonumber \\ 
		&\leq& \frac{\|T\|}{2}(|\langle x, V^*y\rangle|+\|x\| \|y\| ).
	\end{eqnarray}
	where $T=V|T|$ is the polar decomposition of $T$.
\end{corollary}
\begin{remark}
	Inequality	\eqref{buzanogeneral2} is an improvement and refinement of a result recently obtained by Sababheh et al. (see \cite[Remark 3.1]{SMH}).
\end{remark}

\bigskip 


Recall that $T$ is called a positive contraction if $0\leq T\leq I.$ As a consequence of Corollary \ref{positivosgral}, we conclude that $T\in \mathcal{A}_{\alpha}$ for every $\alpha\in [0,2]$.

\bigskip 
Now, we obtain a refinement of the classical Cauchy-Schwarz  inequality, using positive contractions. The idea of the proof is based in \cite[Theorem 2.1]{Dra17}. Recently, in \cite{SMH} the same result was obtained with a different proof.

\begin{theorem}\label{refCScontraction}
	Let $T\in \bh$  be a positive contraction and $x, y \in \mathcal{H},$ then
	\begin{equation*}
		|\langle x, y\rangle|+\langle Tx, x\rangle^{1/2}\langle Ty, y\rangle^{1/2}-|\langle Tx, y\rangle|\leq \|x\| \|y\|.
	\end{equation*}
\end{theorem}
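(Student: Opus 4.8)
The plan is to exploit the splitting $I = T + (I-T)$. Since $T$ is a positive contraction, the operator $I-T$ is again positive with $0 \le I-T \le I$, so both $T$ and $I-T$ admit positive square roots. Throughout I would use the standard identities $\langle Tx,x\rangle = \|T^{1/2}x\|^2$ and $\langle Tx, y\rangle = \langle T^{1/2}x, T^{1/2}y\rangle$, together with their analogues for $I-T$. The underlying idea is that the positivity of $I-T$ lets one distribute the ``defect'' $\|x\|\|y\|-|\langle x,y\rangle|$ between the $T$-part and the $(I-T)$-part.

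First I would write the exact identity
\[
\langle x, y\rangle = \langle Tx, y\rangle + \langle (I-T)x, y\rangle ,
\]
apply the triangle inequality, and then bound the second summand by the Cauchy--Schwarz inequality \eqref{CS} applied to $(I-T)^{1/2}x$ and $(I-T)^{1/2}y$, obtaining
\[
|\langle x, y\rangle| \le |\langle Tx, y\rangle| + \langle (I-T)x, x\rangle^{1/2}\langle (I-T)y, y\rangle^{1/2}.
\]
Isolating $|\langle x,y\rangle| - |\langle Tx, y\rangle|$ and then adding $\langle Tx, x\rangle^{1/2}\langle Ty, y\rangle^{1/2}$ to both sides reduces the claim to the single scalar inequality
\[
\langle Tx, x\rangle^{1/2}\langle Ty, y\rangle^{1/2} + \langle (I-T)x, x\rangle^{1/2}\langle (I-T)y, y\rangle^{1/2} \le \|x\|\|y\| .
\]

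To finish, I would set $p = \langle Tx, x\rangle$ and $q = \langle Ty, y\rangle$, so that $\langle (I-T)x,x\rangle = \|x\|^2 - p$ and $\langle (I-T)y, y\rangle = \|y\|^2 - q$, and recognize the left-hand side as the Euclidean inner product of the vectors $(\sqrt{p}, \sqrt{\|x\|^2 - p})$ and $(\sqrt{q}, \sqrt{\|y\|^2 - q})$ in $\mathbb{R}^2$. The ordinary Cauchy--Schwarz inequality in $\mathbb{R}^2$ then bounds it by $\sqrt{p + (\|x\|^2 - p)}\,\sqrt{q + (\|y\|^2 - q)} = \|x\|\|y\|$, which is exactly what is required.

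I do not anticipate a serious obstacle: the whole argument rests on the observation that the positive-contraction hypothesis forces $I-T \ge 0$, after which everything is the Cauchy--Schwarz inequality used twice (once in $\mathcal{H}$ and once in $\mathbb{R}^2$). The only points requiring a little care are keeping the absolute values and square roots consistent and verifying the final two-dimensional reduction; since the triangle inequality absorbs all phase issues, no genuine difficulty should arise.
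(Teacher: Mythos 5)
Your proof is correct and takes essentially the same route as the paper's: both hinge on the Cauchy--Schwarz inequality for the positive operator $I-T$ together with the triangle inequality, and your final two-dimensional Cauchy--Schwarz step is exactly the paper's elementary inequality $(ac-bd)^2\geq (a^2-b^2)(c^2-d^2)$ (applied with $a=\|x\|$, $b=\langle Tx,x\rangle^{1/2}$, $c=\|y\|$, $d=\langle Ty,y\rangle^{1/2}$) in a different guise. The only difference is the order in which the three ingredients are chained, which is immaterial.
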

\begin{proof}
	For any $x,y \in \mathcal{H}$ and the elementary  inequality $(ac-bd)^2\geq (a^2-b^2)(c^2-d^2)$, which holds for any real
	numbers $a, b, c, d,$
	we have
	\begin{eqnarray}\label{ineqa}
		\left(\|x\| \|y\|-\langle Tx, x\rangle^{1/2}\langle Ty, y\rangle^{1/2}\right)^2&\geq& (\|x\|^2-\langle Tx, x\rangle)(\|y\|^2-\langle Ty, y\rangle)\nonumber\\
		&=&\langle(I-T)x, x\rangle \langle(I-T)y, y\rangle. \
	\end{eqnarray}
	As $T$ is a positive contraction, then $I-T\in \bh^+.$ By Cauchy-Schwarz  inequality for positive operators 
	\begin{equation}\label{CSpositive}
		\langle(I-T)x, x\rangle \langle(I-T)y, y\rangle\geq |\langle(I-T)x, y\rangle|^2=|\langle x, y\rangle-\langle Tx, y\rangle |^2.
	\end{equation}
	Now, by \eqref{ineqa} and \eqref{CSpositive},
	\begin{equation}\label{ineqb}
		\left(\|x\| \|y\|-\langle Tx, x\rangle^{1/2}\langle Ty, y\rangle^{1/2}\right)^2\geq |\langle x, y\rangle-\langle Tx, y\rangle |^2, 
	\end{equation}
	for any $x, y \in \mathcal{H}.$ Since  $\|x\|\geq \langle Tx, x\rangle^{1/2}$ and $\|y\|\geq \langle Ty, y\rangle^{1/2}$, by taking the square root, \eqref{ineqb} is equivalent to
	\begin{equation}\label{ineqc}
		\|x\| \|y\|-\langle Tx, x\rangle^{1/2}\langle Ty, y\rangle^{1/2}\geq |\langle x, y\rangle-\langle Tx, y\rangle |. 
	\end{equation}
	On making use of  the triangle inequality for the
	modulus,
	we have 
	\begin{eqnarray}\label{ineqd}
		\|x\| \|y\|-\langle Tx, x\rangle^{1/2}\langle Ty, y\rangle^{1/2}&\geq& |\langle x, y\rangle-\langle Tx, y\rangle |\nonumber \\
		&\geq& |\langle x, y\rangle|-|\langle Tx, y\rangle |,
	\end{eqnarray}
	and this completes the proof. 
\end{proof}

\begin{remark}
	Recall that any orthogonal projection $P=P^2=P^*$ is a positive contraction with $\|P\|=1$ and $P\in \mathcal{A}_2$. Then, for any $x,y\in \mathcal{H}$ 
	\begin{equation}\label{proy1}
		|\langle Px, y\rangle| \leq \left|\langle Px, y\rangle-\frac12 \langle x, y\rangle\right|+\frac12|\langle x, y\rangle|\leq \frac12(|\langle x, y\rangle|+\|x\| \|y\| ),
	\end{equation}
	and
	\begin{equation}\label{proy2}
		\left|\langle Px, y\rangle-\langle x, y\rangle\right|\leq \frac12(|\langle x, y\rangle|+\|x\| \|y\| ).
	\end{equation}
	In \eqref{proy1} and \eqref{proy2}  we reach,  with a new proof,  an improvement and refinement of different statements previously obtained by Dragomir in \cite{Dra16}.
	
	Motivated by the previous inequalities valid for orthogonal projections, we establish some vector inequalities 
	for particular  projections.
	Let $T=z\otimes z$, with $z\in \mathcal{H}$ and  $\|z\|=1$,  as $T$ is an orthogonal projection then by 
	using inequality \eqref{buzanogeneral} we get
	\begin{equation*}\label{refdragomir}
		|\langle x, z\rangle \langle z, y\rangle| \leq \left|	\langle x, z\rangle \langle z, y\rangle-\frac12 \langle x, y\rangle\right|+\frac12 |\langle x, y\rangle|
		\leq \frac12(|\langle x, y\rangle|+\|x\| \|y\| ),
	\end{equation*}
	for any $x, y \in \mathcal{H}.$ This inequality refines the classical Buzano inequality.\\
	
	On the other hand,  using inequality \eqref{ineqc} 
	\begin{equation*}
		|\langle x,y\rangle|\leq |\langle x,y\rangle-\langle x,z\rangle\langle z,y\rangle|+|\langle x,z\rangle\langle z,y\rangle|\leq \|x\|\|y\|,
	\end{equation*}
	for any $x,y \in \mathcal{H}.$ This refinement of \eqref{CS} was also obtained in \cite{Dra85}.
\end{remark}

Now we are in position to obtain a Buzano type inequality for the sum of two orthogonal projections. It is well-known that given two orthogonal projections on $\mathcal{H}$, $P$ and $ Q$,  then 
\begin{equation}\label{Duncan_Taylor_Kittaneh}
	\|P+Q\|= 1+\|PQ\|.
\end{equation}
This is usually called Duncan-Taylor equality, and its proof can be found in \cite{DT}.

\begin{proposition}\label{sumproj}
	Let $P, Q$  be  orthogonal projections on $\mathcal{H}$. 
	Then, $P+Q\in \mathcal{A_{\alpha}}$ for $|\alpha|\leq \frac{2}{1+\|PQ\|}$.
	
\end{proposition}
\begin{proof}
	Note that $P+Q\in \bh^+$ and by \eqref{Duncan_Taylor_Kittaneh}, $\|P+Q\|= 1+\|PQ\|$.
	Thus, using Corollary \ref{positivosgral}, the proof is complete.
\end{proof}

Throughout, $\mathcal{S}$ and $\mathcal{T}$ denote two closed subspaces of  $\cH$. The mininal angle or 
angle of Dixmier between $\mathcal{S}$ and $\mathcal{T}$ 
is the angle $\theta_0(\mathcal{S}, \mathcal{T})\in [0, \frac{\pi}{2}]$ 
whose cosine is defined by
\begin{equation*}
	c_0(\mathcal{S}, \mathcal{T})=\sup\{|\langle x, y\rangle|: x\in \mathcal{S}, y\in \mathcal{T}; \|x\|, \|y\|\leq 1\}.
\end{equation*}
A linear operator defined on  $\mathcal{H}$, such that $Q^2=Q$ is called a  projection.  Such operators are not necessarily bounded, since on every infinite-dimensional Hilbert space there exist unbounded examples of projections (see \cite{Buc}). The operator $Q_{\mathcal{M}//\mathcal{N}}$ is an oblique projection along (or parallel to) its null space $\mathcal{N} = \mathcal{N}(Q)$ onto
its range $\mathcal{M} = \mathcal{R}(Q)$.


\begin{theorem} 	Let $H$ be a Hilbert space such that is the direct
	sum of closed subspaces $\mathcal{M}$ and $\mathcal{N}$. Let $Q_{\mathcal{M}//\mathcal{N}}$, be the bounded projection with range $\mathcal{M}$ and null space $\mathcal{N}$, and $\theta_0(\mathcal{M}, \mathcal{N})$, be the minimal angle between $\mathcal{M}$
	and $\mathcal{N}$. Then, for any $x, y \in \mathcal{H}$
	\begin{equation*}
		\left |\langle Qx, y\rangle-\frac{1}{2} \langle x, y\rangle\right|\leq \frac{\cot(\alpha_0)} {2}\|x\| \|y\|, 
	\end{equation*}
	and 
	\begin{equation*}
		|\langle Qx, y\rangle| \leq \frac{\cot(\alpha_0)} {2}(|\langle x, y\rangle|+\|x\| \|y\| ), 
	\end{equation*}
	where $\alpha_0=\frac{\theta_0(\mathcal{M}, \mathcal{N})}{2}.$
\end{theorem}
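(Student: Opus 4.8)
The plan is to reduce both inequalities to a single computation of the operator norm $\|2Q-I\|$, exactly in the spirit of the Fujii--Kubo argument for orthogonal projections, and to identify this norm with $\cot(\alpha_0)$.

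First I would record that, since $Q$ is idempotent, the symmetry $S:=2Q-I$ satisfies $S^{2}=I$, and that for any $w\in\mathcal H$ written uniquely as $w=m+n$ with $m\in\mathcal M$ and $n\in\mathcal N$ one has $Qw=m$, hence $Sw=m-n$. Consequently
\[
\frac{\|Sw\|^{2}}{\|w\|^{2}}=\frac{\|m\|^{2}-2\operatorname{Re}\langle m,n\rangle+\|n\|^{2}}{\|m\|^{2}+2\operatorname{Re}\langle m,n\rangle+\|n\|^{2}}.
\]
Taking the supremum over $w$ amounts to maximizing this quotient over $m\in\mathcal M$ and $n\in\mathcal N$. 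Since the quotient is decreasing in $\operatorname{Re}\langle m,n\rangle$, after adjusting the phase of $n$ I may assume $\langle m,n\rangle=-t$ with $t\ge 0$; by the very definition of the Dixmier cosine $c_{0}:=c_{0}(\mathcal M,\mathcal N)=\cos\theta_{0}$ one has $t\le c_{0}\|m\|\|n\|$, and a short optimization (the extremal configuration being $\|m\|=\|n\|$ with $t\to c_{0}\|m\|\|n\|$) gives
\[
\|2Q-I\|^{2}=\frac{1+c_{0}}{1-c_{0}}=\cot^{2}\!\Big(\frac{\theta_{0}}{2}\Big)=\cot^{2}(\alpha_{0}),
\]
the middle equality being the half--angle identity applied to $c_{0}=\cos\theta_{0}$. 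Thus $\|2Q-I\|=\cot(\alpha_{0})$; when $\theta_{0}=\pi/2$ this recovers $\|2P-I\|=1$ for orthogonal projections.

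With this norm in hand the first inequality is immediate from Cauchy--Schwarz:
\[
\Big|\langle Qx,y\rangle-\tfrac12\langle x,y\rangle\Big|=\Big|\big\langle(Q-\tfrac12 I)x,y\big\rangle\Big|\le \tfrac12\|2Q-I\|\,\|x\|\|y\|=\tfrac{\cot(\alpha_{0})}{2}\|x\|\|y\|.
\]
For \eqref{buzanogeneraloblique} I would add and subtract $\tfrac12\langle x,y\rangle$ and combine the triangle inequality with the bound just proved, obtaining $|\langle Qx,y\rangle|\le \tfrac{\cot(\alpha_{0})}{2}\|x\|\|y\|+\tfrac12|\langle x,y\rangle|$; since $\theta_{0}\in[0,\pi/2]$ forces $\alpha_{0}\in[0,\pi/4]$ and hence $\cot(\alpha_{0})\ge 1$, the factor $\tfrac12$ may be enlarged to $\tfrac{\cot(\alpha_{0})}{2}$, which yields the claimed estimate.

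The only genuine obstacle is the norm computation $\|2Q-I\|=\cot(\alpha_{0})$: one must treat the supremum defining $c_{0}$ as possibly unattained (so the identity holds as a supremum, approached along suitable $m_{k}\in\mathcal M$, $n_{k}\in\mathcal N$), and one must fix the phase so that $\langle m,n\rangle$ is real and as negative as possible before optimizing over the norms. Everything after that is a routine application of Cauchy--Schwarz and the triangle inequality, mirroring the structure of \cref{generalizacion}; note, however, that \cref{generalizacion} does not apply directly here, since $\cot(\alpha_{0})\ge 1$ means $Q$ need not belong to $\mathcal A_{2}$.
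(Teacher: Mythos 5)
Your proof is correct and follows the same overall route as the paper: both arguments hinge on the identity $\|2Q-I\|=\cot(\alpha_0)$, then apply Cauchy--Schwarz to $\bigl\langle (Q-\tfrac12 I)x,y\bigr\rangle$ and finish with the triangle inequality together with $\cot(\alpha_0)\ge 1$ (the paper's phrase is ``mimicking the proof of Theorem \ref{generalizacion}''). The one genuine difference is that the paper simply cites Theorem 2 of Buckholtz \cite{Buc} for $\|2Q-I\|=\cot(\alpha_0)$, whereas you derive it from scratch via the decomposition $w=m+n$, the phase adjustment making $\langle m,n\rangle$ real and negative, and the optimization giving $\|2Q-I\|^2=\frac{1+c_0}{1-c_0}$ with the half-angle identity; that computation is correct (including the reduction to $\|m\|=\|n\|$ and the caveat that the supremum defining $c_0$ need only be approached), and it makes the proof self-contained at the cost of a page of elementary work that the citation avoids. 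You are also right, and it is worth saying explicitly as you do, that \cref{generalizacion} cannot be invoked as a black box here because $Q$ need not lie in $\mathcal{A}_2$ when $\cot(\alpha_0)>1$.
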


\begin{proof}
	By Theorem 2 in \cite{Buc} we have that $\|Q\|=\csc(\theta_0(\mathcal{M}, \mathcal{N}))$ and $\|2Q-I\|=\cot(\alpha_0).$ From the boundness of $Q$ we can assert that $0<\theta_0(\mathcal{M}, \mathcal{N})\leq \frac{\pi}{2}$ and  $\cot(\alpha_0)\geq 1.$ From these facts and mimicking the proof of Proposition \ref{generalizacion}, we have that for any $x, y \in \cH$
	\begin{equation*}
		\left |\langle Qx, y\rangle-\frac{1}{2} \langle x, y\rangle\right|\leq \frac{\cot(\alpha_0)} {2}\|x\| \|y\|, 
	\end{equation*}
	and 
	\begin{eqnarray}\label{buzanogeneraloblique2}
		|\langle Qx, y\rangle| &\leq&\left |\langle Qx, y\rangle-\frac{1}{2} \langle x, y\rangle\right|+ \frac{1}{2} |\langle x, y\rangle|\nonumber\\
		&\leq& \frac{\cot(\alpha_0)} {2}\|x\| \|y\|+
		\frac{1}{2} |\langle x, y\rangle|\nonumber\\ 
		&\leq& \frac{\cot(\alpha_0)} {2}(|\langle x, y\rangle|+\|x\|\|y\|). \nonumber\
	\end{eqnarray}
	This completes the proof. 
\end{proof}

We finish this section by showing that any operator whose real part is greater than $sI$ for some $s>0$, is invertible and its inverse belongs to $\mathcal{A}_{2s}.$ 

\begin{theorem}
	Let $T \in \bh-\{0\}$ with $Re(T)=\frac{T+T^*}{2}\geq sI$ for some $s>0.$ Then, $T^{-1}\in \mathcal{A}_{2s}.$
\end{theorem}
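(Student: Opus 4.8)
The plan is to proceed in two stages: first confirm that $T$ is genuinely invertible so that $T^{-1}$ is well defined, and then verify the defining inequality $\|2sT^{-1}-I\|\le 1$ of $\mathcal{A}_{2s}$ by a congruence argument.

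For invertibility, I would start from the observation that for every unit vector $x$ the hypothesis $Re(T)\ge sI$ gives $Re\langle Tx,x\rangle=\langle Re(T)x,x\rangle\ge s$. Since $\|Tx\|\ge|\langle Tx,x\rangle|\ge Re\langle Tx,x\rangle\ge s$, the operator $T$ is bounded below, in fact $m(T)\ge s>0$; in particular $T$ is injective with closed range. Because $Re(T^*)=Re(T)\ge sI$ as well, the same estimate applied to $T^*$ shows $T^*$ is bounded below, so $\mathcal{N}(T^*)=\{0\}$, whence $\overline{\mathcal{R}(T)}=\mathcal{N}(T^*)^{\perp}=\mathcal{H}$, i.e. $T$ has dense range. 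A closed dense range is all of $\mathcal{H}$, so $T$ is bijective and hence $T\in\mathcal{GL}(\mathcal{H})$.

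Next I would reformulate membership in $\mathcal{A}_{2s}$. Expanding $(2sT^{-1}-I)^*(2sT^{-1}-I)$ and using that $\|2sT^{-1}-I\|\le1$ is equivalent to $(2sT^{-1}-I)^*(2sT^{-1}-I)\le I$, the claim reduces, after cancelling the common $I$ and dividing by $2s>0$, to the operator inequality
\[
2s\,(T^{-1})^*T^{-1}\le T^{-1}+(T^{-1})^*.
\]
Finally, I would obtain this inequality from the hypothesis by sandwiching. Starting from $2sI\le T+T^*$ and conjugating by the invertible operator $T^{-1}$ (multiply on the left by $(T^{-1})^*$ and on the right by $T^{-1}$, which preserves the order since congruence sends positive operators to positive operators), the left-hand side becomes $2s(T^{-1})^*T^{-1}$, while on the right the identities $(T^{-1})^*TT^{-1}=(T^{-1})^*$ and $(T^{-1})^*T^*T^{-1}=T^{-1}$ — both following from $(T^{-1})^*T^*=(TT^{-1})^*=I$ — produce exactly $T^{-1}+(T^{-1})^*$. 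This is precisely the displayed inequality, completing the proof.

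I expect the only genuinely delicate point to be the invertibility step, specifically ensuring that \emph{both} $T$ and $T^*$ are bounded below, so that one secures surjectivity and not merely injectivity; it is the coercivity of $Re(T)$ that supplies this for $T^*$ as well. The subsequent reduction and the congruence computation are routine once one recognizes that conjugating the hypothesis by $T^{-1}$ is the correct move.
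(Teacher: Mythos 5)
Your proposal is correct and follows essentially the same route as the paper: the heart of both arguments is conjugating the hypothesis $T+T^*\geq 2sI$ by the inverse to obtain $(2sT^{-1}-I)^*(2sT^{-1}-I)\leq I$ (the paper writes the congruence on the transposed side, $T^{-1}(\cdot)(T^*)^{-1}$, which is immaterial). The only difference is the preliminary invertibility step, where you use that $Re(T)\geq sI$ makes both $T$ and $T^*$ bounded below, while the paper deduces $0\notin\sigma(T)$ from the inclusion $\sigma(T)\subseteq\overline{W(T)}\subseteq\{z:Re(z)\geq s\}$; both are standard and correct.
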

\begin{proof}
	First, we show that $T$ is invertible. The hypothesis $Re(T)\geq sI$ implies that  
	$$
	W(T)\subseteq \{z\in \mathbb{C}:Re(z)\geq s\},
	$$
	since if $z\in W(T)$ then 
	\begin{eqnarray}
		Re(z)&=&\frac{z+\overline{z}}{2}=\frac{\langle Tx, x\rangle +\overline{\langle Tx, x\rangle}}{2}=\frac{\langle Tx, x\rangle +\langle T^*x, x\rangle}{2}\nonumber\\
		&=&\langle Re(T)x, x\rangle\geq s.\nonumber\
	\end{eqnarray}
	Thus $\sigma(T)\subseteq \overline{W(T)}\subseteq \{z\in \mathbb{C}:Re(z)\geq s\}$ and, in particular, we have that $0\notin \sigma(T),$ which means $T\in \mathcal{GL}(\mathcal{H}).$
	If $T+T^*\geq 2sI$ and $T\in \mathcal{GL}(\mathcal{H})$, then $2sT^{-1}\left(T+T^*- 2sI\right)(T^*)^{-1}\geq 0$ and
	$$
	I\geq I-2sT^{-1}\left(T+T^*- 2sI\right)(T^*)^{-1}=(I-2sT^{-1})(I-2sT^{-1})^*,
	$$
	which is equivalent to $\|2sT^{-1}-I\|\leq 1.$ Hence, $T^{-1}\in \mathcal{A}_{2s}$ and the result follows.
\end{proof}

\section{Bounds for the numerical radius using Buzano inequality}\label{s5}

In this section, we use  \eqref{buzanogeneral2} to obtain a refinement of the classical inequality $\omega(T)\leq \|T\|$ and an upper bound for $\omega(T)-\frac{1}{2}\|T\|.$

\begin{proposition}
	Let $T\in  \mathcal{B}(\mathcal{H})$ with polar decomposition $T=V|T|$. Then, 
	\begin{equation}\label{polar1}
		\omega(T)\leq \frac{\|T\|}{2}\left(1+\omega(V)\right)\leq \|T\|
	\end{equation}
	and
	\begin{equation}\label{polar2}
		\omega(T)- \frac{\|T\|}{2}\leq \frac{\|T\|}{2}\omega(V).
	\end{equation}
	
\end{proposition}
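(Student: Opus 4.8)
The plan is to apply the general Buzano inequality \eqref{buzanogeneral2} established earlier for an arbitrary bounded operator via its polar decomposition, and then specialize to the numerical radius by taking a supremum over unit vectors. First I would recall that for the polar decomposition $T=V|T|$, inequality \eqref{buzanogeneral2} gives, for all $x,y\in\mathcal{H}$,
\begin{equation*}
|\langle Tx, y\rangle|\leq \frac{\|T\|}{2}\bigl(|\langle x, V^*y\rangle|+\|x\|\,\|V^*y\|\bigr).
\end{equation*}
The key move is to set $y=x$ with $\|x\|=1$, so that the left-hand side becomes $|\langle Tx, x\rangle|$, whose supremum over the unit sphere is exactly $\omega(T)$.

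With $y=x$ and $\|x\|=1$, the right-hand side reads $\tfrac{\|T\|}{2}\bigl(|\langle x, V^*x\rangle|+\|V^*x\|\bigr)$. I would then bound each piece: the first term $|\langle x, V^*x\rangle|=|\langle Vx, x\rangle|\leq \omega(V)$ by definition of the numerical radius, and the second term $\|V^*x\|\leq \|V^*\|=\|V\|\leq 1$, since a partial isometry has operator norm at most one. Combining these,
\begin{equation*}
|\langle Tx, x\rangle|\leq \frac{\|T\|}{2}\bigl(\omega(V)+1\bigr),
\end{equation*}
and taking the supremum over all unit vectors $x$ yields the first asserted inequality $\omega(T)\leq \frac{\|T\|}{2}(1+\omega(V))$. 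The final bound $\frac{\|T\|}{2}(1+\omega(V))\leq \|T\|$ follows immediately from $\omega(V)\leq \|V\|\leq 1$, which is the right inequality in \eqref{omegaequiv} together with $\|V\|\leq 1$.

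The second inequality is then purely algebraic: rearranging $\omega(T)\leq \frac{\|T\|}{2}+\frac{\|T\|}{2}\omega(V)$ gives $\omega(T)-\frac{\|T\|}{2}\leq \frac{\|T\|}{2}\omega(V)$, so no new idea is needed once the first part is in hand. The only point demanding any care — and the step I would flag as the main (minor) obstacle — is the treatment of the partial isometry $V$: one must use $\|V\|\leq 1$ rather than $\|V\|=1$, since $V$ need not be an isometry, and one must correctly pass between $\langle x, V^*x\rangle$ and $\langle Vx, x\rangle$ via the definition of the adjoint before invoking $\omega(V)$. Everything else is a direct specialization of \eqref{buzanogeneral2} and a supremum.
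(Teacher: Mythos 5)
Your proof is correct and follows essentially the same route as the paper: specialize \eqref{buzanogeneral2} to $y=x$ on the unit sphere, bound $|\langle x,V^*x\rangle|=|\langle Vx,x\rangle|\leq\omega(V)$ and $\|V^*x\|\leq 1$, and take the supremum; the second inequality is the same algebraic rearrangement. You simply spell out the intermediate estimates that the paper leaves implicit.
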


\begin{proof}
	Taking $x=y$ in \eqref{buzanogeneral2}, and the supremum over all $x\in \cH$ with $\|x\|=1$, we obtain $\omega(T)\leq \frac{\|T\|}{2}\left(1+\omega(V)\right)$ and this completes the proof. 
	
\end{proof}

It is important to note that inequalities \eqref{polar1} and \eqref{polar2} are not trivial, since $\omega(V)$ may be less than one, depending on the partial isometry $V$. For instance, let
$$T=\begin{bmatrix}
	0&0\\
	\sqrt{2}&0
\end{bmatrix}=\begin{bmatrix}
	0&0\\
	1&0
\end{bmatrix}\begin{bmatrix}
	\sqrt{2}&0\\
	0&0
\end{bmatrix}=V|T|,$$
where $V$ is a partial isometry in $\C^2$ with $\ker(T)=\ker(V)=\text{span}\{(0,1)\}$ and $\ker(V)^{\perp}=\text{span}\{(1,0)\}$. Then, for any $x=(x_1,x_2)\in \C$ with $\|x\|=\sqrt{|x_1|^2+|x_2|^2}=1$, we have as a consequence of the arithmetic-geometric mean inequality
$$\left| \left\langle Vx,x \right\rangle\right| = |x_1\overline{x_2}|=|x_1||x_2|\leq \dfrac{|x_1|^2+|x_2|^2}{2}=\frac{1}{2}.$$
Therefore, $W(V)=\{z\in \C: |z|\leq \frac{1}{2}\}$ and $\omega(V)=\frac{1}{2}<1$.

\begin{proposition}\label{omegaigualnorma} 
	Let $T\in  \mathcal{B}(\mathcal{H})- \{0\}$ with polar decomposition $T=V|T|$ such that $\omega(T)=\|T\|$, then $\omega(V)=\|V\|=1.$
\end{proposition}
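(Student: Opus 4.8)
The plan is to split the claim into its two assertions, $\|V\|=1$ and $\omega(V)=1$, settling the norm equality by elementary partial-isometry facts and the numerical-radius equality by the refined bound in \cref{omegapolar}. First I would observe that since $T\neq 0$, the partial isometry $V$ in the polar decomposition $T=V|T|$ cannot be the zero operator, for $V=0$ would give $T=V|T|=0$. Every nonzero partial isometry acts as an isometry on the orthogonal complement of its kernel $\mathcal{N}(V)=\mathcal{N}(T)$ and annihilates $\mathcal{N}(V)$; hence $\|Vx\|=\|x\|$ for $x\in \mathcal{N}(V)^{\perp}$ (so $\|V\|\geq 1$) while $\|Vx\|\leq \|x\|$ for all $x$ (so $\|V\|\leq 1$), giving $\|V\|=1$.

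It then remains to prove $\omega(V)=1$. Since the numerical radius is dominated by the operator norm, $\omega(V)\leq \|V\|=1$, so the task reduces to establishing the reverse inequality $\omega(V)\geq 1$. For this I would invoke \cref{omegapolar}, which yields
\[
\omega(T)\leq \frac{1}{2}\left(\|T\|+\frac{\|T\|}{2}\bigl(1+\omega(V)\bigr)\right).
\]
I would then substitute the hypothesis $\omega(T)=\|T\|$ and, since $\|T\|>0$ because $T\neq 0$, divide through by $\|T\|$; the resulting inequality $1\leq \tfrac12+\tfrac14\bigl(1+\omega(V)\bigr)$ rearranges to $\omega(V)\geq 1$. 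Together with the trivial upper bound this forces $\omega(V)=1$.

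The argument is essentially forced once \cref{omegapolar} is available, so I do not anticipate a serious obstacle; the only points requiring care are confirming $V\neq 0$ (so that $\|V\|=1$ rather than $0$) and $\|T\|\neq 0$ (so that dividing to extract $\omega(V)\geq 1$ is legitimate), both immediate from $T\neq 0$. As an alternative one could read off $\omega(V)=1$ by tracking equality through the full chain of inequalities in \cref{omegapolar}, but the single-step estimate above is the cleanest route.
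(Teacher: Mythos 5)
Your proof is correct and follows essentially the same route as the paper: both arguments feed the hypothesis $\omega(T)=\|T\|$ into the chain of inequalities from \cref{omegapolar} to force $\omega(V)=1$, and both use that a nonzero partial isometry has norm one. The only (immaterial) difference is that you extract $\omega(V)\geq 1$ from the first inequality and combine it with $\omega(V)\leq\|V\|=1$, whereas the paper reads off $\frac{\|T\|}{4}(\omega(V)-1)=0$ from the forced equality in the chain.
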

\begin{proof} From inequality \eqref{polar1}, we get 
	$$\omega(T)\leq \frac{\|T\|}{2}\left(1+\omega(V)\right)\leq \|T\|.
	$$ Thus, if $\omega(T)=\|T\|$, then $1+\omega(V)=2,$ and hence $\omega(V)=1$. As $V$ is a nonzero partial isometry, therefore $\|V\|=1$ and thus $\omega(V)=\|V\|=1,$ as required.
\end{proof}

It should also be mentioned here that the converse of Proposition \ref{omegaigualnorma} is not
true. To see this, consider
$$
T=\begin{bmatrix}
	0 & 0 & 0\\
	1 & 0 & 0\\
	0 & 1 & 0
\end{bmatrix}=\begin{bmatrix}
	0 & 0 & 0\\
	1 & 0 & 0\\
	0 & 1 & 0
\end{bmatrix}\begin{bmatrix}
	1 & 0 & 0\\
	0 & 1 & 0\\
	0 & 0& 0
\end{bmatrix}=V|T|.
$$
where $V|T|$ is a polar decomposition of $T$. Then, $\omega(V)=\|V\|=1$, but $\omega(T)=\frac{1}{\sqrt{2}}<1=\|T\|.$ 

In order to estimate how close the numerical radius is from the operator norm, the following reverse inequalities have been obtained under appropriate conditions for the involved operator $T\in \bh$. If $T\in  \mathcal{A_{\alpha}}$,  then by \eqref{omegaequiv} and  Proposition \ref{propiedad} we have that
\begin{equation*}
	0\leq \|T\|-\omega(T)\leq \|T\|-\frac{\|T\|}{2}\leq \frac{1}{|\alpha|}.
\end{equation*}
Motivated by the above inequality, we establish a new upper bound
for the non-negative quantity $\|T\|-\omega(T)$.

\begin{theorem}
	Let $T\in   \mathcal{A_{\alpha}}$. Then, 
	\begin{equation*}
		\|T\|-\omega(T)\leq \frac{1}{2|\alpha|}.
	\end{equation*}
\end{theorem}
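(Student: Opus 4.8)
The plan is to reduce the statement to a scale-free inequality and then exploit the defining condition $\|\alpha T-I\|\le 1$ through a single vector computation. First I would set $S=\alpha T$, so that the hypothesis $T\in\mathcal{A}_{\alpha}$ becomes simply $\|S-I\|\le 1$. Since both the operator norm and the numerical radius are absolutely homogeneous, one has $\|T\|=\frac{1}{|\alpha|}\|S\|$ and $\omega(T)=\frac{1}{|\alpha|}\omega(S)$, whence $\|T\|-\omega(T)=\frac{1}{|\alpha|}\left(\|S\|-\omega(S)\right)$. Thus it suffices to prove that every $S\in\mathcal{B}(\mathcal{H})$ with $\|S-I\|\le 1$ satisfies $\|S\|-\omega(S)\le\frac12$.

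The key step, and the real content of the argument, is a quadratic lower bound for $\omega(S)$ in terms of $\|S\|$ extracted from the hypothesis. For any unit vector $x\in\mathcal{H}$ I would expand
\begin{equation*}
\|(S-I)x\|^2=\|Sx\|^2-2\,\mathrm{Re}\langle Sx,x\rangle+1\le\|S-I\|^2\le 1,
\end{equation*}
which forces $\|Sx\|^2\le 2\,\mathrm{Re}\langle Sx,x\rangle\le 2|\langle Sx,x\rangle|\le 2\omega(S)$. Taking the supremum over all unit vectors $x$ yields $\|S\|^2\le 2\omega(S)$, that is, $\omega(S)\ge\frac{\|S\|^2}{2}$.

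Finally I would carry out an elementary optimization. Writing $t=\|S\|$, the bound $\|S-I\|\le 1$ gives $t\le\|S-I\|+\|I\|\le 2$ (equivalently, this is item (2) of \Cref{propiedad} after rescaling), so $t\in[0,2]$. Combining with the previous step,
\begin{equation*}
\|S\|-\omega(S)\le t-\frac{t^2}{2},
\end{equation*}
and the function $t\mapsto t-\frac{t^2}{2}$ attains its maximum $\frac12$ on $[0,2]$ at $t=1$. Hence $\|S\|-\omega(S)\le\frac12$, and multiplying back by $\frac{1}{|\alpha|}$ yields $\|T\|-\omega(T)\le\frac{1}{2|\alpha|}$, as claimed.

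I expect no genuine obstacle here; the single point requiring insight is the quadratic lower bound $\omega(S)\ge\frac{\|S\|^2}{2}$, which is exactly what improves the constant from the earlier $\frac{1}{|\alpha|}$ to $\frac{1}{2|\alpha|}$. Everything else is homogeneity of the two norms and a one-variable maximization.
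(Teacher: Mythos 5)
Your proof is correct and is essentially the paper's argument in a rescaled form: the identity $\|(S-I)x\|^2=\|Sx\|^2-2\,\mathrm{Re}\langle Sx,x\rangle+1\le 1$ is exactly the paper's opening computation with $S=\alpha T$, and your one-variable maximization of $t-\tfrac{t^2}{2}$ is the same AM--GM inequality $2t\le t^2+1$ that the paper applies pointwise before taking the supremum. The only difference is cosmetic (supremum first versus per-vector estimate), so both routes coincide.
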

\begin{proof}
	For $x\in \h$ with $\|x\|= 1$, we have
	\begin{equation*}
		\|\alpha Tx-x\|^2=|\alpha|^2\|Tx\|^2-2Re(\alpha \langle Tx, x\rangle)+1\leq 1
	\end{equation*}
	giving 
	\begin{equation*}
		|\alpha|^2\|Tx\|^2+1\leq 1 +2Re(\alpha \langle Tx, x\rangle)\leq 2|\alpha| |\langle Tx, x\rangle|+1.
	\end{equation*}
	By arithmetic-geometric mean inequality, we deduce
	\begin{equation}\label{desigualdad}
		2|\alpha| \|Tx\|\leq |\alpha|^2\|Tx\|^2+1\leq 1 +2Re(\alpha \langle Tx, x\rangle)\leq 2|\alpha| |\langle Tx, x\rangle|+1.
	\end{equation}
	Now, taking the supremum over $x\in \h$, $\|x\|=1$ in \eqref{desigualdad}, we obtain
	\begin{equation*}
		2|\alpha| \|T\|-2|\alpha|\omega(T)\leq 1.
	\end{equation*}
\end{proof}

Now, we derive upper bounds for the numerical radius of products of bounded linear operators.
\begin{theorem}\label{RST}
	Let $R, S, T\in \mathcal{B}(\mathcal{H})$ such that $T\in  \mathcal{A_{\alpha}}$. Then,
	\begin{align}
		\omega(STR)\leq \frac{1}{|\alpha|}\left(\|R\|\|S\| + \omega(SR)\right).
	\end{align}
\end{theorem}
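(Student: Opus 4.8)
The plan is to unwind the numerical radius of $STR$ into an inner product of the form to which the Buzano-type bound \eqref{buzanogeneral} of Theorem \ref{generalizacion} directly applies. Since $T\in\mathcal{A_{\alpha}}$, that theorem gives, for all $u,v\in\mathcal{H}$,
\begin{equation*}
|\langle Tu,v\rangle|\leq \frac{1}{|\alpha|}\bigl(|\langle u,v\rangle|+\|u\|\,\|v\|\bigr),
\end{equation*}
and the whole proof amounts to choosing $u$ and $v$ cleverly and then taking a supremum.

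First I would fix a unit vector $w\in\mathcal{H}$ with $\|w\|=1$ and rewrite the quadratic form of $STR$ by moving $S$ across the inner product:
\begin{equation*}
\langle STRw,w\rangle=\langle T(Rw),S^{*}w\rangle .
\end{equation*}
Next I would apply the displayed inequality above with $u=Rw$ and $v=S^{*}w$, obtaining
\begin{equation*}
|\langle STRw,w\rangle|\leq \frac{1}{|\alpha|}\bigl(|\langle Rw,S^{*}w\rangle|+\|Rw\|\,\|S^{*}w\|\bigr).
\end{equation*}

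The key algebraic observations are then that $\langle Rw,S^{*}w\rangle=\langle SRw,w\rangle$, so the first summand is controlled by $\omega(SR)$ once we sup over $w$, and that $\|Rw\|\leq\|R\|$ together with $\|S^{*}w\|\leq\|S^{*}\|=\|S\|$ (using $\|w\|=1$ and the isometry of the adjoint on norms) bounds the second summand by $\|R\|\,\|S\|$. Taking the supremum over all unit vectors $w$ on both sides and recalling $\omega(A)=\sup_{\|w\|=1}|\langle Aw,w\rangle|$ yields exactly
\begin{equation*}
\omega(STR)\leq \frac{1}{|\alpha|}\bigl(\|R\|\,\|S\|+\omega(SR)\bigr).
\end{equation*}

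I do not expect a genuine obstacle here: the argument is a direct specialization of \eqref{buzanogeneral}. The only points requiring a little care are the adjoint manipulation $\langle STRw,w\rangle=\langle T(Rw),S^{*}w\rangle$ and the recognition that the mixed term reassembles into $\langle SRw,w\rangle$ rather than something involving $T$; once those identifications are made, the norm estimates and the passage to the supremum are routine.
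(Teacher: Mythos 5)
Your proof is correct and follows essentially the same route as the paper: both apply the bound \eqref{buzanogeneral} to $|\langle STRx,x\rangle|=|\langle T(Rx),S^{*}x\rangle|$, identify the cross term as $\langle SRx,x\rangle$, estimate the norms by $\|R\|\,\|S\|$, and take the supremum over unit vectors. No differences worth noting.
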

\begin{proof}
	From inequality \eqref{buzanogeneral}, we have
	\begin{align*}\label{ineq2}
		|\langle STRx, y\rangle|=|\langle TRx, S^*y\rangle|\leq \frac{1}{|\alpha|}(|\langle Rx, S^*y\rangle|+\|Rx\| \|S^*y\| ).
	\end{align*}
	Taking $y=x$ and the supremum over $x\in \mathcal{H}$ with $\|x\|=1$, yields the desired inequality.
	
\end{proof}

We note that the previous result is a generalization of Theorem 3.6 in \cite{Dra17}.  In particular, for the sum of two orthogonal projections, we obtain the following result. 

\begin{corollary}
	Let $P,Q,R,S\in \mathcal{B}(\mathcal{H})$ with $P, Q$  be  orthogonal projections. Then,
	\begin{equation*}
		\omega(R(P+Q)S)\leq \frac{1+\|PQ\|}{2}\left(\|S\| \|R\| +\omega(RS)\right).
	\end{equation*}
\end{corollary}

\begin{proof}
	As we have already mentioned, $P+Q\in  \mathcal{A}_{\frac{2}{1+\|PQ\|}}.$ Then, the statement is a consequence of Theorem \ref{RST}. 
\end{proof}

On the other hand,  in \cite{Dra07}, Dragomir obtained,  utilizing Buzano’s inequality, the following inequality for the
numerical radius 
\begin{equation*}
	\omega(S)^2\leq \frac{1}{2}\left(\|S\|^2 + \omega(S^2)\right),
\end{equation*}
combining  with the following power inequality for the numerical radius, $w(S^n)\leq w(S)^n$ for any natural number $n$, we have
\begin{equation}\label{Dragomir}
	w(S^2)\leq \frac{1}{2}\left(\|S\|^2 + \omega(S^2)\right).
\end{equation}

The following corollary, which is an immediate consequence of Theorem \ref{RST} considering $R = S$, gives a generalization of \eqref {Dragomir}.

\begin{corollary}
	Let $S, T\in \mathcal{B}(\mathcal{H})$ with $T\in  \mathcal{A_{\alpha}}$. Then,
	\begin{align*}\label{RequalS}
		\omega(STS)\leq \frac{1}{|\alpha|}\left(\|S\|^2 + \omega(S^2)\right).
	\end{align*}
\end{corollary}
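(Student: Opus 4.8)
The plan is to invoke Theorem~\ref{RST} directly with the choice $R=S$. Since $S\in\mathcal{B}(\mathcal{H})$ and by hypothesis $T\in\mathcal{A_{\alpha}}$, the triple $(S,S,T)$ satisfies exactly the hypotheses of Theorem~\ref{RST}: the roles of $R$ and $S$ there are both filled by the single operator $S$, while $T$ remains the member of $\mathcal{A_{\alpha}}$. Applying that theorem therefore yields
\[
\omega(STS)\leq \frac{1}{|\alpha|}\left(\|S\|\,\|S\|+\omega(SS)\right).
\]
Recognizing that $\|S\|\,\|S\|=\|S\|^2$ and that $SS=S^2$, so that $\omega(SS)=\omega(S^2)$, gives precisely the asserted bound. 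There is no genuine obstacle here: the statement is a pure specialization of the preceding theorem, and the only point to verify is the routine bookkeeping that the substitution $R=S$ is admissible, which it plainly is since no disjointness or independence between $R$ and $S$ was assumed in Theorem~\ref{RST}.

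As a consistency check that this indeed generalizes Dragomir's inequality \eqref{Dragomir}, I would inspect the relevant special case. Taking $T=I$ one has $\|\alpha T-I\|=|\alpha-1|$, so $I\in\mathcal{A_{\alpha}}$ whenever $|\alpha-1|\leq 1$; in particular $I\in\mathcal{A}_{2}$. With $T=I$ and $\alpha=2$ the corollary reduces to $\omega(S^2)\leq \frac{1}{2}\left(\|S\|^2+\omega(S^2)\right)$, which is exactly \eqref{Dragomir}. This confirms that the family $\mathcal{A_{\alpha}}$ recovers the classical estimate while supplying the promised extension to an arbitrary $T\in\mathcal{A_{\alpha}}$, the scalar $\tfrac{1}{2}$ being replaced by $\tfrac{1}{|\alpha|}$.
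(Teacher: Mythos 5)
Your proof is correct and matches the paper's argument exactly: the corollary is obtained by substituting $R=S$ into Theorem~\ref{RST}, which is precisely what the paper does. The added consistency check recovering \eqref{Dragomir} via $T=I\in\mathcal{A}_{2}$ is accurate but not part of the paper's (one-line) justification.
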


%
\section*{Funding}

The research of Dr. Tamara Bottazzi is partially supported by National Scientific and Technical Research Council of Argentina (CONICET), Universidad Nacional de R\'io Negro and ANPCyT PICT 2017-2522.

The research of Dr. Cristian Conde is partially supported by National Scientific and Technical Research Council of Argentina (CONICET), Universidad Nacional de General Sarmiento and ANPCyT PICT 2017-2522.

\end{document}